\setlist[itemize]{wide = 0pt, labelwidth = 2em, labelsep*=0em, itemindent = 0pt, leftmargin = \dimexpr\labelwidth + \labelsep\relax, noitemsep,topsep = 1ex,}
\setlist[enumerate]{wide = 0pt, labelwidth = 2em, labelsep*=0em, itemindent = 0pt, leftmargin = \dimexpr\labelwidth + \labelsep\relax, noitemsep,topsep = 1ex}
\definecolor{hot}{RGB}{65,105,225}
\theoremstyle{plain}
\newtheorem{theorem}{Theorem}[section]
\newtheorem{proposition}[theorem]{Proposition}
\newtheorem{thmx}{Theorem}
\renewcommand{\thethmx}{\Alph{thmx}}
\newtheorem{claim}[theorem]{Claim}
\newtheorem{corollary}[theorem]{Corollary}
\newtheorem{conj}[theorem]{Conjecture}
\newtheorem{lemma}[theorem]{Lemma}
\theoremstyle{definition}
\newtheorem{definition}[theorem]{Definition}
\newtheorem{remark}[theorem]{Remark}
\newtheorem*{ex*}{Example}
\theoremstyle{plain}
\newlist{thmlist}{enumerate}{1}
\setlist[thmlist]{wide = 0pt, labelwidth = 2em, labelsep*=0em, itemindent = 0pt, leftmargin = \dimexpr\labelwidth + \labelsep\relax, noitemsep,topsep = 1ex, font=\normalfont, label=(\roman*), ref=\thetheorem.(\roman{thmlisti})}
\newlist{thmenum}{enumerate}{1} 
\setlist[thmenum]{wide = 0pt, labelwidth = 2em, labelsep*=0em, itemindent = 0pt, leftmargin = \dimexpr\labelwidth + \labelsep\relax, noitemsep,topsep = 1ex, font=\normalfont, label=(\roman*), ref=\thethmx.(\roman{thmenumi})}
\newlist{corlist}{enumerate}{1} 
\setlist[corlist]{wide = 0pt, labelwidth = 2em, labelsep*=0em, itemindent = 0pt, leftmargin = \dimexpr\labelwidth + \labelsep\relax, noitemsep,topsep = 1ex, font=\normalfont, label=(\roman*), ref=\thecorx.(\roman{corlisti})}
\crefname{lemma}{Lemma}{Lemmas} 
\crefname{conjecture}{Conjecture}{Conjectures}
\crefname{theorem}{Theorem}{Theorems}
\crefname{proposition}{Proposition}{Propositions}
\crefname{definition}{Definition}{Definitions}
\crefname{remark}{Remark}{Remarks}
\crefname{corollary}{Corollary}{Corollaries} 
\crefname{corx}{Corollary}{Corollaries} 
\crefname{thmx}{Theorem}{Theorems}
\newcommand\GL{\mathrm{GL}}
\def\hess{\sqrt{-1}\partial\bar{\partial}}
\def\db{\bar{\partial}}
\DeclareMathOperator{\dvol}{dvol}
\def\C{\mathbb{C}}
\def\bR{\mathbb{R}}
\def\cR{\mathscr{R}}
\def\bC{\mathbb{C}}
\def\cH{\mathcal{H}}
\def\cC{\mathcal{C}}
\def\cD{\mathscr{D}}
\def\cO{\mathcal{O}}
\def\cL{\mathcal{L}}
\author[Y. Deng]{Ya Deng}
\address{CNRS, Institut \'Elie Cartan de Lorraine, Universit\'e de Lorraine, Site de
	Nancy,   54506 Vand\oe uvre-lès-Nancy, France} 
\email{ya.deng@math.cnrs.fr}
\urladdr{https://ydeng.perso.math.cnrs.fr} 
\author[B. Wang]{Botong Wang}
\address{Department of Mathematics, University of Wisconsin-Madison, 480 Lincoln Drive, Madison WI 53706-1388, USA}
\email {wang@math.wisc.edu}
\urladdr{https://people.math.wisc.edu/~bwang274/}
\title[ $L^2$-vanishing theorem and a conjecture of Koll\'ar]{$L^2$-vanishing theorem 
	and a conjecture of Koll\'ar}
\keywords{Koll\'ar conjecture,  $L^2$-cohomology, Vanishing theorem, Harmonic map, Bruhat-Tits  building, Linear Shafarevich conjecture, Variation of (mixed) Hodge structure, Generically large fundamental group}
\subjclass{32J25, 14D07, 53C23, 32L20}
\begin{document} 
	
	\begin{abstract}
	In 1995, Koll\'ar conjectured that   a smooth complex projective \(n\)-fold \(X\) with generically large fundamental group has  Euler characteristic \(\chi(X, K_X)\geq 0\).      In this paper, we confirm  the conjecture assuming \(X\) has  linear fundamental group, i.e., there exists a  representation \(\pi_1(X)\to {\rm GL}_N(\mathbb{C})\) with finite kernel.  We deduce the conjecture by proving a stronger \(L^2\) vanishing theorem: for the universal cover \(\widetilde{X}\) of such \(X\), its   \(L^2\)-Dolbeault cohomology \(H_{(2)}^{n,q}(\widetilde{X})=0\) for  \(q\neq 0\).     The main ingredients of the proof are techniques from the linear Shafarevich conjecture along with  some analytic methods.


	\end{abstract}
	\begin{altabstract}
		En 1995, Kollár a conjecturé qu’une variété projective complexe lisse \(X\) de dimension \(n\), dont le groupe fondamental est génériquement  grand, vérifie l’inégalité \( \chi(X, K_X) \geq 0 \). Dans cet article, nous confirmons cette conjecture sous l’hypothèse que le groupe fondamental de \(X\) est linéaire, c’est-à-dire qu’il existe une représentation \( \pi_1(X) \to \mathrm{GL}_N(\mathbb{C}) \) à noyau fini. Notre approche repose sur un théorème d’annulation \(L^2\) plus fort: pour le revêtement universel \( \widetilde{X} \) de \( X \), la cohomologie de Dolbeault \(L^2\) vérifie \( H_{(2)}^{n,q}(\widetilde{X}) = 0 \) pour tout \( q \neq 0 \). Les ingrédients principaux de la démonstration proviennent de techniques liées à la conjecture de Shafarevich linéaire, ainsi que de méthodes analytiques.
	\end{altabstract}
	\maketitle
	\tableofcontents
	\section{Introduction}
	\subsection{Main theorem}
	In the  study of Shafarevich maps, Koll\'ar made the following conjecture (\cite[Conjecture 18.12.1]{Kol95}).
\begin{conj}[Koll\'ar]\label{conj:kollar}
	Let $X$ be a smooth complex projective variety. If $X$ has generically large fundamental group, then $\chi(X,K_X)\geq 0$. 
\end{conj}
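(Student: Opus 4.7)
The plan is to deduce Koll\'ar's conjecture (under the linearity hypothesis that this paper adopts) from the stronger $L^2$-vanishing theorem $H^{n,q}_{(2)}(\widetilde{X}) = 0$ for all $q \neq 0$, via Atiyah's $L^2$-index theorem. Atiyah's theorem identifies
$$\chi(X, K_X) = \sum_{q=0}^{n} (-1)^q \dim_{\Gamma} H^{n,q}_{(2)}(\widetilde{X}),$$
where $\Gamma = \pi_1(X)$ and $\dim_{\Gamma}$ is the von Neumann dimension; if all higher $L^2$ groups vanish, the right side collapses to $\dim_{\Gamma} H^{n,0}_{(2)}(\widetilde{X}) \geq 0$. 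So the conjecture reduces to the $L^2$-vanishing, and everything subsequent is aimed at that.

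To prove the vanishing, I would exploit the technology developed around the linear Shafarevich conjecture. The linearity of $\pi_1(X)$, together with the Archimedean theory (harmonic maps to symmetric spaces, Simpson-type variations of Hodge structure and their mixed refinements) and the non-Archimedean theory (Gromov--Schoen harmonic maps to Bruhat--Tits buildings), produces a proper holomorphic Shafarevich-type map $\mathrm{sh}\colon \widetilde{X} \to S$ whose target $S$ is a Stein analytic space and whose fibers are exactly the preimages of those subvarieties $Z \subset X$ whose image in $\pi_1(X)$ is finite. The hypothesis that $X$ has generically large fundamental group forces these fibers to be generically $0$-dimensional, so $\mathrm{sh}$ is generically finite and $\dim_{\bC} S = n$.

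Given such a generically finite proper holomorphic map to a Stein space, the strategy is to pull back a strictly plurisubharmonic exhaustion $\varphi$ from $S$ to $\widetilde{X}$ and feed the resulting plurisubharmonic exhaustion $\mathrm{sh}^{*}\varphi$ into a Bochner--Kodaira--Nakano $L^2$-estimate on $\widetilde{X}$. For $(n,q)$-forms with $q \geq 1$, the positivity of $\hess\,\mathrm{sh}^{*}\varphi$ (which holds generically, since $\mathrm{sh}$ is generically an immersion) supplies the curvature term needed to kill $L^2$-harmonic representatives; the completeness of the Kähler metric required to close the estimate can be arranged on the complement of the degenerate locus by a standard twisting of the metric by a cutoff of $\mathrm{sh}^{*}\varphi$. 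This should produce the desired $H^{n,q}_{(2)}(\widetilde{X}) = 0$ for $q \geq 1$, after which the Atiyah reduction above finishes the proof.

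The main obstacle, in my view, is controlling the degenerate locus of $\mathrm{sh}$: the subvarieties of $X$ with non-maximal $\pi_1$-image are only proper analytic, but $L^2$-harmonic $(n,q)$-forms can in principle concentrate there, and the non-strict plurisubharmonicity of $\mathrm{sh}^{*}\varphi$ along these loci makes the Bochner estimate borderline. A secondary difficulty is that the linear Shafarevich map is typically constructed only after passing to a finite \'etale cover and a birational modification of $X$, so the vanishing must be transported back to the original $X$ using birational invariance of $\chi(X, K_X)$ together with the behaviour of $L^2$-Dolbeault cohomology under proper birational modifications of Kähler manifolds.
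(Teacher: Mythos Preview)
Your reduction via Atiyah's $L^2$-index theorem is exactly what the paper does: once $H^{n,q}_{(2)}(\widetilde X)=0$ for $q\geq 1$, the index formula collapses to $\chi(X,K_X)=\dim_\Gamma H^{n,0}_{(2)}(\widetilde X)\geq 0$. The divergence is in how the vanishing itself is obtained.

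Your proposed mechanism---pull back a strictly psh exhaustion $\varphi$ from the Stein Shafarevich target and run a Bochner--Kodaira--Nakano estimate---has a real gap, and it is precisely the one you flag. For a Bochner-type argument on the complete manifold $(\widetilde X,\pi_X^*\omega)$ you need the curvature term bounded below by a positive constant, not merely strictly positive on a dense open set; generic positivity of $\hess\,\mathrm{sh}^*\varphi$ does not suffice, since an $L^2$-harmonic form can live on the degenerate locus. Your fix of twisting the metric on the complement of that locus changes the $L^2$ spaces and severs the link to Atiyah's theorem, which is tied to the $\pi_1(X)$-invariant metric $\pi_X^*\omega$. A second, more hidden issue is growth: a psh exhaustion on a general Stein space carries no a priori control relative to the Riemannian distance on $\widetilde X$, so even if you switch to a Gromov-style argument, you have no sublinear primitive to feed into it.

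The paper avoids both problems by working in the Gromov/Cao--Xavier/Jost--Zuo ``K\"ahler parabolic'' framework rather than Bochner--Kodaira. Concretely, it builds a $1$-form $\beta$ on $\widetilde X$ with \emph{sublinear growth} directly from the harmonic-map data (the equivariant pluriharmonic maps to Bruhat--Tits buildings give $\beta_i=\sqrt{-1}\,\bar\partial\, d^2_{\Delta(G_i)}(u_i(x),u_i(x_0))$, and the period map of a $\bC$-VHS contributes a bounded piece $\eta_{\cL}$), so that $d\beta$ is a positive current dominating the pullback of a current which is K\"ahler on an open set of the base of the \emph{reductive} Shafarevich morphism $f=\mathrm{sh}_M:X\to Y$. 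A Stokes-formula/cut-off argument (Theorem~2.2) then forces any $L^2$ holomorphic $p$-form to vanish for $p<\dim Y$ and, for $p\geq\dim Y$, to restrict to a closed $L^2$ holomorphic $(p-\dim Y)$-form on generic fibres of $f$. The second, essential step---absent from your outline---is that by the structure of the \emph{linear} Shafarevich morphism these fibres admit generically finite maps to abelian varieties, and a separate $L^2$-vanishing for such covers (Corollary~2.3) kills those fibrewise forms. This yields $H^{p,0}_{(2)}(\widetilde X)=0$ for $p<n$, and the Lefschetz isomorphism $\alpha\mapsto\omega^q\wedge\alpha$ converts this into $H^{n,q}_{(2)}(\widetilde X)=0$ for $q\geq 1$. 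So the architecture you propose (Atiyah $+$ Shafarevich input) is right, but the analytic engine should be the sublinear-primitive/Stokes argument together with the two-level fibration, not a direct Bochner estimate with a generic psh exhaustion.
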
 
	Following the notations of \cite{Kol95}, we say that $X$ \emph{has generically large fundamental group} (resp. $\varrho:\pi_1(X)\to \GL_N(\bC)$ is a \emph{generically large representation}) if for any irreducible positive-dimensional subvariety $Z$ of $X$ passing through a very general point, the image ${\rm Im}[\pi_1(Z^{\rm norm})\to \pi_1(X)]$ (resp. $\rho({\rm Im}[\pi_1(Z^{\rm norm})\to \pi_1(X)])$) is infinite (see \cite[Definition 2.4]{Kol95} for the precise meaning of ``very general''). Here $Z^{\rm norm}\to Z$ is the normalization of $Z$.     Note that in \cite{CDY22,DY23,DY24}, generically large is called \emph{big}.  
	
In \cite{GL87}, Green and Lazarsfeld  proved Koll\'ar's conjecture when $X$ has maximal Albanese dimension.   In this paper, we will use  methods of $L^2$-cohomology to study  \Cref{conj:kollar}. For a compact  K\"ahler manifold $(X,\omega)$, we denote by $\pi_X:\widetilde{X}\to X$ its universal cover, and  for any non-negative integer $p$ and $q$,  let  $H_{(2)}^{p,q}(\widetilde{X})$ be  the  $L^2$-Dolbeault cohomology group  with respect to the metric $\pi_X^*\omega$. Note that its  definition does not depend on the choice of $\omega$.
	Our main theorem is the following:	\begin{thmx}\label{main:kollar}
		Let $X$ be a  smooth complex projective  $n$-fold. If there exists a generically large representation $\varrho:\pi_1(X)\to \GL_N(\bC)$, then the following statements hold.
		    \begin{thmenum} 
			\item  \label{item:vanishing} $H^{p,0}_{(2)}(\widetilde{X})=0$   for  $0\leq p\leq n-1$ and $H^{n,q}_{(2)}(\widetilde{X})=0$ for  $1\leq q\leq n$. 
			\item \label{item:Euler}The Euler characteristic $\chi(X, K_X)\geq 0$.
			\item  \label{item:strict} If the strict inequality $\chi(X,K_X)>0$ holds, then 
			\begin{enumerate}[label*=(\alph*)]
				\item \label{item:converse2}there exists  {a nontrivial} $L^2$-holomorphic $n$-form on $\widetilde{X}$;
				\item  \label{item:converse} $X$ is of general type. 
			\end{enumerate} 
		\end{thmenum}
	\end{thmx}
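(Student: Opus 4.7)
I plan to identify (i) as the technical core and deduce (ii) and (iii)(a) as essentially formal consequences via Atiyah's $L^2$-index theorem, leaving (iii)(b) as a separate (but shorter) argument. First, the two vanishings in (i) are equivalent: by $L^2$-Serre duality on the Galois cover $\widetilde X\to X$, combined with complex conjugation on Dolbeault cohomology, one has $H^{p,0}_{(2)}(\widetilde X)\simeq\overline{H^{0,p}_{(2)}(\widetilde X)}$ dual (as a Hilbert $\mathcal{N}(\pi_1(X))$-module) to $H^{n,n-p}_{(2)}(\widetilde X)$, so that vanishing for $0\le p\le n-1$ matches vanishing of $H^{n,q}_{(2)}$ for $1\le q\le n$. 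Consequently it suffices to show that $\widetilde X$ carries no nontrivial $L^2$-holomorphic form of degree $p<n$.

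\textbf{Proof of (i).} I would exploit the geometric and analytic structure imposed on $X$ by the generically large representation $\varrho$ via the linear Shafarevich machinery of \cite{CDY22,DY23,DY24}: harmonic maps to Bruhat--Tits buildings for non-archimedean specializations of $\varrho$, variations of (mixed) Hodge structure and tame harmonic bundles for the archimedean ones, and, because $\varrho$ is generically large, a birational Shafarevich-type morphism $s_X\colon X\to S_X$ whose universal cover $\widetilde{S_X}$ carries abundant (strictly) plurisubharmonic functions. The plan is to glue these pieces into a sufficiently positive plurisubharmonic potential $\psi$ on $\widetilde X$, and then run a Bochner--Kodaira/Nakano-type argument in the spirit of Gromov's K\"ahler-hyperbolic vanishing and Demailly's $L^2$-estimates for $\bar\partial$ with weight $\psi$, to force any $L^2$-holomorphic $p$-form $\eta$ ($p<n$) to be pulled back from a proper subvariety of $S_X$; $L^2$-integrability on the fibres of $s_X$ together with the generically large hypothesis then forces $\eta\equiv 0$.

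\textbf{Proof of (ii) and (iii)(a).} Granting (i), Atiyah's $L^2$-index theorem applied to $\bar\partial$ on $K_X$ and to the Galois cover $\widetilde X\to X$ gives
\[
\chi(X,K_X)=\sum_{q=0}^n(-1)^q h^{n,q}_{(2)}(\widetilde X),
\]
where $h^{n,q}_{(2)}$ denotes the von Neumann dimension of the reduced $L^2$-Dolbeault cohomology. By (i) only $q=0$ survives, so $\chi(X,K_X)=h^{n,0}_{(2)}(\widetilde X)\ge 0$, which is (ii). When the inequality is strict, the same identity gives $h^{n,0}_{(2)}(\widetilde X)>0$, hence a nontrivial $L^2$-holomorphic $n$-form on $\widetilde X$, which is (iii)(a).

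\textbf{Proof of (iii)(b) and main obstacle.} For (iii)(b) the plan is to convert the $L^2$-holomorphic top form $\sigma$ from (iii)(a) into bigness of $K_X$: using $\sigma$ together with its $\pi_1(X)$-translates (for instance via a Bergman kernel construction for the twisted line bundle $K_X\otimes\varrho$-bundle), I would build a singular Hermitian metric on $K_X$ with positive curvature current, and then use the generically large hypothesis, via the Shafarevich morphism, to show that this metric has strictly positive volume on $X$, so that $K_X$ is big. The main obstacle throughout is step (i): constructing a plurisubharmonic potential on $\widetilde X$ that is strictly positive in enough directions to kill $L^2$-holomorphic forms in \emph{every} degree $p<n$, rather than only those transverse to the Shafarevich fibration. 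This is precisely where the recent non-archimedean harmonic-map techniques from the linear Shafarevich programme must be combined with analytic $L^2$-methods; once (i) is secured, (ii)--(iii) follow essentially formally.
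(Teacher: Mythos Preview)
Your proposal is correct and follows essentially the same route as the paper: (i) via a Gromov--Jost--Zuo-style vanishing argument using 1-forms with sublinear growth built from harmonic maps to Bruhat--Tits buildings and period maps, combined with the reductive/linear Shafarevich morphisms and a fiber argument (the fibers map generically finitely to abelian varieties), and then (ii) and (iii)(a) via Atiyah's $L^2$-index theorem exactly as you describe. Two minor simplifications relative to your outline: the paper deduces the $H^{n,q}_{(2)}$ vanishing from the $H^{p,0}_{(2)}$ vanishing via Gromov's $L^2$-Lefschetz isomorphism rather than Serre duality, and for (iii)(b) it simply cites Koll\'ar's existing result \cite[Corollary~13.10]{Kol95}---a nontrivial $L^2$ holomorphic top form on $\widetilde X$ already forces $K_X$ big---so no Bergman-kernel construction or twist by $\varrho$ is needed.
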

	In particular, we prove \Cref{conj:kollar} assuming $\pi_1(X)$ is linear, i.e. there exists a representation $ \pi_1(X)\to \GL_N(\bC)$ with finite kernel. 
	
	The most difficult aspect of proving \cref{main:kollar} is  showing that $H^{p,0}_{(2)}(\widetilde{X})=0$   for  $0\leq p\leq n-1$. We  outline the proof strategy at the beginning of  \Cref{sec:linear}. The  conclusion of \cref{main:kollar} follows from this $L^2$-vanishing result via Atiyah's $L^2$-index theorem. This  implication from \cref{item:vanishing} to \cref{item:Euler} is well known to experts, and similar results have  appeared in various forms in the literature, including \cite{Gr,CD01,Eys99,Eys00,Tak04,JZ}, among others.
	
Let us point out that in the special case where $\varrho$ in \cref{main:kollar} is assumed to be \emph{semisimple}, both \cref{item:vanishing} and \cref{item:Euler} were previously claimed in \cite{JZ}, although certain aspects of their argument appear to require further clarification. Our approach follows a similar overall strategy; however, the novelty in the linear case lies in establishing a partial vanishing theorem  (see \cref{thm:vanishing2}), which requires more refined techniques from harmonic maps into Euclidean buildings  in order to apply the methods developed in the linear Shafarevich conjecture \cite{EKPR12} and thereby prove \cref{item:vanishing}.

	\subsection{Some histories and comparison with previous works}
	In this subsection, $(X,\omega)$ is a compact K\"ahler manifold of dimension $n$ and we denote by $\pi_X:\widetilde{X}\to X$  the universal cover.  In \cite{Gr}, Gromov introduced the notion of \emph{K\"ahler hyperbolicity} in his study of the Hopf conjecture. Recall that  $X$ is K\"ahler hyperbolic if there is a smooth 1-form $\beta$ such that $\pi_X^*\omega=d\beta$ and the norm $|\beta|_{\pi_X^*\omega}$ is bounded from above by a constant.   He then proved the vanishing of $k$-th $L^2$-Betti numbers of $\widetilde{X}$  for $k\neq n$.

	Gromov's idea was later extended by Eyssidieux \cite{Esy97}, in which  more general notions  of \emph{weakly K\"ahler hyperbolicity} are introduced.   Eyssidieux's  work was recently generalized by {Bei}, Claudon, {Diverio}, {Eyssidieux}, and {Trapani} \cite{BDET22,BCDT24}, who studied a {birational} analog of {K\"ahler hyperbolicity}. 
 Also as generalizations of Gromov's work,   Cao-Xavier \cite{CX} and Jost-Zuo \cite{JZ}   introduced the notion of \emph{K\"ahler parabolicity}. They independently observed that the arguments of Gromov concerned with the vanishing of $L^2$-Betti numbers work also under the weaker assumption that  $\beta$ is a smooth 1-form such that $|\beta(x)|_{\pi_X^*\omega}$  has \emph{sub-linear growth}. In other words,   there exists a constant $c>0$ such that  $|\beta(x)|_{\pi_X^*\omega}\leq c(1+d_{\widetilde{X}}(x,x_0))$, where $d_{\widetilde{X}}(x,x_0)$ is the Riemannian distance between $x$ and the base point $x_0$ in $\widetilde{X}$ relative to the metric $\pi^*\omega$.   
	
The formulation and proof of our partial $L^2$-vanishing theorem in  \cref{thm:vanishing2}  are inspired by the aforementioned works, and is introduced specifically for the proof of \cref{main:kollar}.  As a result, it may appear technically involved.  
	
Another key component of the proof of \cref{main:kollar} draws on techniques developed in the context of the reductive and linear Shafarevich conjectures, as explored  in \cite{DY23, Eys04, EKPR12}. The Shafarevich conjecture predicts that the universal cover of a smooth complex projective variety is holomorphically convex. This was proved by Eyssidieux, Katzarkov, Pantev, and Ramachandran \cite{Eys04, EKPR12} for smooth projective varieties with linear fundamental groups. More recently, the conjecture was extended by Yamanoi and the first author in \cite{DY23} to projective normal varieties with reductive fundamental groups, and further studied in \cite{DY24} for projective normal varieties with fundamental groups in positive characteristic.

	\subsection{Notation and Convention}
	\begin{itemize}
	\item All varieties in this paper are defined over $\C$.
		\item Let $(X.\omega)$ be a K\"ahler manifold. We denote by $\pi_X:\widetilde{X}\to X$ the universal cover of $X$.  
		\item Let $(X,\omega)$ be a compact K\"ahler manifold. Unless otherwise specified, $d_{\widetilde{X}}(x, x_0)$ stands for the Riemannian distance between $x$ and the base point $x_0$ in $\widetilde{X}$ with respect to the metric $\pi_X^*\omega$. 
		\item For a complex space $Z$, $Z^{\rm norm}$ denotes its normalization, and $Z^{\rm reg}$ denotes its regular locus. 
		\item We use the standard abbreviations VHS and VMHS for \emph{variation of Hodge structure} and \emph{variation of mixed Hodge structure} respectively.
		\item By convention, a closed positive $(1,1)$-current $T$ on a complex manifold is   \emph{semi-positive}. 
		\item Plurisubharmonic functions  are  abbreviated as psh functions. 
		\item A positive closed $(1,1)$-current $T$  has \emph{continuous potential} if locally we have $T=\hess\psi$ with $\psi$  a continuous psh function. 
		\item By convention, a real \emph{positive} \( (1,1) \)-form or current is understood to be \emph{semi-positive} in the usual sense. 
	\end{itemize}
	\subsection{Acknowledgment}    We would like to express our gratitude to Patrick Brosnan, Junyan Cao, Simone Diverio, Philippe Eyssidieux, Mihai P\u{a}un, Pierre Py, Christian Schnell,  Xu Wang, and Mingchen Xia for their inspiring discussions. We extend special thanks to Chikako Mese for her invaluable discussion  and expertise on harmonic maps into Euclidean buildings, and  to Yuan Liu and Xueyuan Wan for reading the first version of the paper and their very helpful remarks.   YD acknowledges the partial support of the French Agence Nationale de la Recherche (ANR) under reference ANR-21-CE40-0010. BW thanks Peking University and BICMR for the generous hospitality and support during the writing of this paper. He is partially supported by a Simons fellowship.
	
	\section{A partial $L^2$-vanishing theorem}
 \label{sec:kp}

	Before proving a key $L^2$-vanishing theorem, we first recall the definition of $L^2$-cohomology (cf. \cite[Definition 12.3]{DIP02}).
	\begin{definition}[$L^2$-cohomology]\label{defn:L2} 
		Let $(Y,\omega)$ be a complete K\"ahler manifold.  Let   $L_{(2)}^{p,q}(Y)$  be the space of $L^2$-integrable   $(p,q)$-forms with respect to the metric $\omega$. A section $u$ is said to be in  ${\rm Dom}\, \db$  if  $\db u$  calculated in the sense of distributions is still in $L^2$.  Then  the $L^2$-Dolbeault cohomology is defined as
		\[ H^{p,q}_{(2)}(Y)= {{\rm ker}\, \db}\big/\,{\overline{{\rm Im}\, \db \cap {\rm Dom}\, \db}}.
		\]  
		If $(X,\omega)$ is a compact K\"ahler manifold, then 	$H^{p,q}_{(2)}(\widetilde{X})$ denotes the $L^2$-cohomology with respect to the metric $\pi_X^*\omega$. It is well known that this cohomology group is independent of the choice of Kähler metric $\omega$. 
	\end{definition} 
	
	Let us state and prove our partial  $L^2$-vanishing theorem. 
	\begin{theorem} \label{thm:vanishing2} 
 Let   $(X,\omega)$ be a compact 	 K\"ahler $n$-fold.  Let $f:X\to Y$ be a proper surjective holomorphic map with connected fibers over a compact  K\"ahler normal space $Y$ of dimension $m$. Denote by $\tilde{f}:\widetilde{X}\to \widetilde{Y}$ the lift of $f$ to  universal covers.   Assume that  there exists   a  1-form  $\beta$ on $\widetilde{X}$  with $L_{\rm loc}^1$-coefficients  satisfying the following properties. 
 	\begin{enumerate}[label*=(\rm \alph*)]
 	\item \label{item:smooth} The 1-form $\beta$ is smooth  on an open subset   of $\widetilde{X}$ whose complement has zero Lebesgue measure, and  there is a constant $C>0$ such that  \begin{align}\label{item:dlinear}
 	 |\beta(x)|_{\pi_X^*\omega}\leq_{\rm a.e.} C(d_{\widetilde{X}}(x,x_0)+1)
 \end{align}
 where $x_0$ is a base point of $\widetilde{X}$. 
 	\item\label{item:bounded}   The current \( d\beta \) is a real positive \( (1,1) \)-current satisfying
 	\begin{align}\label{eq:mutual}
 		d\beta \geq \pi_X^* f^* T_Y,
 	\end{align}
 	where \( T_Y \) is a closed positive \( (1,1) \)-current on \( Y \). Moreover, there exists a non-empty analytic open subset \( Y_0\subset Y \) such that \( T_Y|_{Y_0} \) is smooth and strictly positive.
 \end{enumerate} 
 Then  \begin{thmlist}
 	 \item\label{item:partial}  For any $p\in \{0,\ldots,m-1\}$, we have  $H^{p,0}_{(2)}(\widetilde{X})=0$. 
 	\end{thmlist}
Assume that there exists a non-zero class $\alpha \in H^{p,0}_{(2)}(\widetilde{X})$ for some $p\in \{m,\ldots,n-1\}$. Let $Y^\circ$ be a non-empty open subset of $Y^{\mathrm{reg}} \cap Y_0$ over which $f$ is a proper submersion, and such that the restriction $T_Y|_{Y^\circ}$ is a K\"ahler form.  Then
 	\begin{thmlist} [resume]
 	 \item\label{item:long}  we have   \begin{align}\label{eq:tensor}
 	 	 \alpha|_{\widetilde{X}^\circ}\in H^0(\widetilde{X}^\circ, \tilde{f}^*\Omega_{\widetilde{Y}^\circ}^{m}\otimes \Omega_{\widetilde{X}^\circ}^{p-m}), 
 	 \end{align}
where $\widetilde{Y}^\circ:= \pi_Y^{-1}(Y^\circ)$ and $\widetilde{X}^\circ:=\tilde{f}^{-1}(\widetilde{Y}^\circ)$.
\item \label{item:d-closed} For any $y\in \widetilde{Y}^\circ$,  let $\alpha_y$ be the holomorphic $(p-m)$-form on $\tilde{f}^{-1}(y)$ induced by $\alpha$   under the isomorphism $$\tilde{f}^*\Omega_{\widetilde{Y}^\circ}^{m}\otimes \Omega_{\widetilde{X}^\circ}^{p-m}|_{\tilde{f}^{-1}(y)}\simeq \Omega_{\tilde{f}^{-1}(y)}^{p-m}.$$ 
 	Then  $
 	 \alpha|_{\tilde{f}^{-1}(y)}
 	 $ is   $d$-closed.
 \end{thmlist}
	
	\end{theorem}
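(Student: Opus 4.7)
The plan is to derive (i)--(iii) from a single Stokes-type vanishing identity applied to $L^2$-Dolbeault-harmonic representatives. On the complete K\"ahler manifold $(\widetilde{X},\pi_X^*\omega)$ the K\"ahler identities force every $L^2$-harmonic $(p,0)$-form $\alpha$ to be $d$-closed, so $d(\alpha\wedge\bar\alpha)=0$. Setting $\eta:=d\beta$ and using $d\omega_X=0$ and $d\eta=0$ one has the key identity
\[
  d\bigl(\alpha\wedge\bar\alpha\wedge\beta\wedge\eta^{k-1}\wedge\omega_X^{n-p-k}\bigr)=\alpha\wedge\bar\alpha\wedge\eta^{k}\wedge\omega_X^{n-p-k},\qquad k\ge 1.
\]
Multiplying by a smooth cut-off $\chi_R$ supported in $B(x_0,2R)$, equal to $1$ on $B(x_0,R)$, and satisfying $|d\chi_R|\le C/R$, Stokes' theorem yields
\[
  \int_{\widetilde{X}}\chi_R\,\alpha\wedge\bar\alpha\wedge\eta^{k}\wedge\omega_X^{n-p-k}=-\int_{\widetilde{X}} d\chi_R\wedge\alpha\wedge\bar\alpha\wedge\beta\wedge\eta^{k-1}\wedge\omega_X^{n-p-k}.
\]
The right-hand side is concentrated in $B(x_0,2R)\setminus B(x_0,R)$, where $|d\chi_R\wedge\beta|_{\pi_X^*\omega}\le C(1+d_{\widetilde{X}}(\cdot,x_0))/R$ is uniformly bounded by \eqref{item:dlinear}; combined with $\alpha\in L^2$ this drives it to $0$ as $R\to\infty$, so $I_k:=\int_{\widetilde{X}}\alpha\wedge\bar\alpha\wedge\eta^{k}\wedge\omega_X^{n-p-k}=0$ for every $k\ge 1$.

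Since $(1,1)$-currents commute under wedge, the telescoping identity $\eta^k-\tilde T^k=(\eta-\tilde T)\wedge\sum_{j=0}^{k-1}\eta^{j}\wedge\tilde T^{k-1-j}$ together with $\eta\ge\tilde T:=\pi_X^*f^*T_Y$ gives $\eta^k\ge\tilde T^k$ as strongly positive $(k,k)$-currents; wedging with $\alpha\wedge\bar\alpha\wedge\omega_X^{n-p-k}$ preserves positivity and forces the positive $(n,n)$-form $\alpha\wedge\bar\alpha\wedge\tilde T^{k}\wedge\omega_X^{n-p-k}$ to vanish a.e.\ on $\widetilde{X}$, in particular on $\widetilde{X}^\circ$. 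At a point of $\widetilde{X}^\circ$ I choose local holomorphic coordinates $(y_1,\ldots,y_m,w_1,\ldots,w_{n-m})$ adapted to $\tilde f$ simultaneously diagonalising $\tilde T=\sum_i\lambda_i\,i\,dy_i\wedge d\bar y_i$ with $\lambda_i>0$ and $\omega_X=\sum_i i\,dy_i\wedge d\bar y_i+\sum_j i\,dw_j\wedge d\bar w_j$; writing $\alpha=\sum_{I,J}\alpha_{I,J}\,dy^I\wedge dw^J$ with $|I|+|J|=p$ and expanding $\omega_X^{n-p-k}=\sum_{a+b=n-p-k}c_{a,b}\,\omega_y^a\wedge\omega_w^b$, a direct computation shows that $\alpha\wedge\bar\alpha\wedge\tilde T^{k}\wedge\omega_y^a\wedge\omega_w^b$ equals, up to strictly positive factors, $\sum_{|I|=m-k-a,\,|J|=n-m-b}|\alpha_{I,J}|^2\,\dvol$. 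Its vanishing for every admissible $(k,a,b)$ with $k\ge 1$ forces $\alpha_{I,J}=0$ whenever $|I|\le m-1$. For $p\le m-1$ the constraint $|I|\le p<m$ kills every component, so $\alpha\equiv 0$ on $\widetilde{X}^\circ$ and hence on $\widetilde{X}$ by analytic continuation, proving (i). For $m\le p\le n-1$ only the components with $|I|=m,\ |J|=p-m$ survive, so $\alpha|_{\widetilde{X}^\circ}\in H^0(\widetilde{X}^\circ,\tilde f^*\Omega^m_{\widetilde{Y}^\circ}\otimes\Omega^{p-m}_{\widetilde{X}^\circ})$, which is (ii). For (iii), writing locally $\alpha=\tilde f^*(dy_1\wedge\cdots\wedge dy_m)\wedge\gamma$ for some $(p-m)$-form $\gamma$, the identity $d\alpha=0$ yields $\tilde f^*(dy_1\wedge\cdots\wedge dy_m)\wedge d\gamma=0$, so $d\gamma$ lies in the ideal generated by $\tilde f^*\Omega^1_{\widetilde{Y}^\circ}$; restricting to the fibre $\tilde f^{-1}(y)$ annihilates this factor and gives $d(\alpha|_{\tilde f^{-1}(y)})=0$.

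The main technical obstacle will be the rigorous justification of the Stokes step when $\eta=d\beta$ is only a distributional positive current and $\beta$ is merely $L^1_{\rm loc}$: one must give meaning to the wedge products $\eta^k$ and $\beta\wedge\eta^{k-1}$, and verify the global integrability of the integrand. The expected remedy is a regularisation of $\beta$ together with Bedford--Taylor--Demailly wedge products of closed positive currents with locally bounded potentials, followed by a monotone/dominated convergence argument anchored in the sub-linear bound \eqref{item:dlinear}.
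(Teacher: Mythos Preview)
Your overall strategy matches the paper's: a Stokes-type identity with a cutoff to show that the positive measure $\sqrt{-1}^{p^2}\alpha\wedge\bar\alpha\wedge d\beta\wedge\omega^{n-p-1}$ has zero mass, then use $d\beta\ge\pi_X^*f^*T_Y$ and a pointwise computation in adapted coordinates. Part (iii) is also handled the same way. There is, however, one substantive misdirection: you do not need any $k\ge 2$. Already with $k=1$, your own pointwise formula (or the paper's version) shows that the coefficient of $|\alpha_{I,J}|^2$ in $\alpha\wedge\bar\alpha\wedge\tilde T\wedge\omega^{n-p-1}$ is strictly positive whenever $I\subsetneq\{1,\dots,m\}$; since $|J|\le n-m$ is automatic, the vanishing of $I_1$ alone forces $\alpha_{I,J}=0$ for all $|I|\le m-1$, which is exactly what you need for (i) and (ii). The paper works only with $k=1$ for precisely this reason. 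Once you drop $k\ge 2$, your ``main technical obstacle'' disappears: you never form $\eta^k$ or $\beta\wedge\eta^{k-1}$, and the only pairing needed is that of the current $d\beta$ against the smooth compactly supported test form $\chi_R\,\alpha\wedge\bar\alpha\wedge\omega^{n-p-1}$, which is unproblematic. As written, your $k\ge 2$ Stokes step is actually not justified, since there is no assumed bound on $|\eta|$ and no hypothesis guaranteeing that $\eta^{k}$ is defined as a current.

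One small difference worth noting: your fat-annulus cutoff $\chi_R$ with $|d\chi_R|\le C/R$ gives $|d\chi_R\wedge\beta|\le C'$ on $B(2R)\setminus B(R)$, so the boundary term is bounded by a constant times $\int_{B(2R)\setminus B(R)}|\alpha|^2\to 0$; this is cleaner than the paper's unit-width cutoff, which requires the subsequence lemma $\liminf j\!\int_{B_j}|\alpha|^2=0$. Either works, but yours avoids that step. Finally, your claim that one can choose local holomorphic coordinates adapted to $\tilde f$ in which both $\omega_X$ and $\tilde T$ are simultaneously diagonal at a point should be phrased carefully: what one actually does is choose a unitary frame of $T^*_x\widetilde X$ (not holomorphic coordinates in a neighborhood) diagonalising both; since the computation is pointwise for a top-degree form and the positive eigenspace of $\tilde T$ coincides with $\mathrm{im}(d\tilde f^*)$, the conclusion $\alpha(x)\in\tilde f^*\Lambda^m T^*_y\widetilde Y\wedge\Lambda^{p-m}T^*_x\widetilde X$ is coordinate-independent and globalises to \eqref{eq:tensor}.
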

	\begin{proof}
  To lighten the notation, we write  $\omega$   instead of   $\pi_X^*\omega$ abusively.  
   
   \noindent {\bf Step 1. }
Since the sectional curvature of the complete K\"ahler manifold $(\widetilde{X},\pi_X^*\omega)$ is uniformly bounded,  by a result of W. Shi (see e.g. \cite[Theorem 1.2]{Hua19}),  there exists a constant $C>0$ and a  smooth exhausting function $r$   on $\widetilde{X}$ such that 
$$
d_{\widetilde{X}}(x,x_0)+1 \leq r(x)\leq d_{\widetilde{X}}(x,x_0)+C,
$$ 
and $|d r|_{\omega}(x) \leq C$  for all $x \in \widetilde{X}$.   Hence by \eqref{item:dlinear}, we have 
\begin{align}\label{eq:betanorm}
	 |\beta|_\omega(x)\leq_{\rm a.e.} Cr(x).
\end{align} 	Let $\varrho: \mathbb{R} \rightarrow \mathbb{R}$ be a smooth function with $0 \leq \varrho \leq 1$ such that
		$$
\varrho(t)= \begin{cases}1, & \text { if } t \leq 0; \\ 0, & \text { if } t \geq 1.\end{cases}
		$$
		We consider the compactly supported function
		\begin{align}\label{eq:support}
			f_j(x)=\varrho(r(x)-{j}+1),
		\end{align} 
		where $j$ is a positive integer.  Then  $${\rm Supp}(f_j)\subset \{x\in \widetilde{X}\mid   r(x)\leq  {j} \},$$ 
		$$
		df_j(x)=\varrho'(r(x)-j+1) dr.
		$$ 
	Then  there exists some constant $c_1>0$ such that 
		\begin{align} \label{eq:uniform}
 |df_j(x)|_{\omega}\leq c_1. 
		\end{align}
		for any $x\in \widetilde{X}$. 
		
	
		
	  Let $\alpha$ be a  holomorphic $(p,0)$-form which is $L^2$ with respect to $\omega$ with $0\leq p\leq n-1$. Then 
	\begin{align}\label{eq:finite}
 \sqrt{-1}^{p^2}\int_{\widetilde{X}}\alpha\wedge \bar{\alpha}\wedge \omega^{n-p}	=	\frac{(n-p)!}{n!}\int_{\widetilde{X}  }    |\alpha|_{\omega}\omega^n <\infty.
		\end{align}

		Denote by 
		$$
		B_j:=\{x\in \widetilde{X}\mid   j-1\leq r(x)\leq  j \}.
		$$  
	In what follows, 	for any smooth form $\gamma$ on $\widetilde{X}$, we denote by $|\gamma|$ its norm with respect to the metric $\omega$.  	Denote $\dvol:=\frac{\omega^n}{n!}$.    
Recall that  $|\beta(x)| \leq_{\rm a.e.} Cr(x)$ for some constant $C>0$. Hence by $\operatorname{supp}( df_j) \subset {B_{j}} $,  one has
		\begin{align} \nonumber
			\left|  \int_{\widetilde{X} }  df_j\wedge \beta\wedge\alpha\wedge \bar{\alpha}\wedge \omega^{n-p-1}  \right| 
			&\leq   \int_{  B_j} \left|  df_j\wedge \beta\wedge\alpha\wedge \bar{\alpha}\wedge \omega^{n-p-1}\right| \dvol   \\ \nonumber
			&\leq \int_{B_j}     |d f_j| \;  |\beta| \; | {\alpha}|^2 \; |\omega^{n-p-1}|   \dvol\\\label{eq:small}
			&\stackrel{\eqref{eq:uniform}\& \eqref{eq:betanorm} }{\leq} c_1Cj\int_{B_j}     | {\alpha}|^2   \dvol.  
		\end{align}
		
		\begin{claim}\label{claim:converge}
			There exists a subsequence $\left\{j_i\right\}_{i \geq 1}$ such that
			\begin{align}\label{eq:crucial}
				\lim _{i \rightarrow \infty} j_i  \int_{B_{j_i}}|\alpha(x)|^2 \dvol=0 .
			\end{align}  
		\end{claim}	 
		\begin{proof}
			If not, then there exists a positive constant $c'$ and a positive integer $n_0$ such that 
			$$
		j\int_{B_{j} }|\alpha(x)|^2 \dvol \geq c'>0
			$$
			for any $j\geq n_0$.
			This yields
			$$
			\begin{aligned}
			 \int_{\widetilde{X}}|\alpha(x)|^2 \dvol & \geq \sum_{j=n_0}^{\infty} \int_{B_{j}}  |\alpha(x)|^2 \dvol \\
				& \geq c' \sum_{j=n_0}^{+\infty} \frac{1}{j}=+\infty,
			\end{aligned}
			$$
		thereby yielding a contradiction, as $\alpha$ is assumed to be an $L^2$-form with respect to $\omega$.
		\end{proof}	
			\Cref{claim:converge} implies that there exists a subsequence $\left\{j_i\right\}_{i \geq 1}$ for which \eqref{eq:crucial} holds.  By the Stokes formula, we have
		\begin{align}\label{eq:essential} 
			\sqrt{-1}^{p^2}	\int_{\widetilde{X} } f_j  d\beta \wedge \alpha\wedge \bar{\alpha}\wedge \omega^{n-p-1}
			= \sqrt{-1}^{p^2}	\int_{\widetilde{X} }  (-df_j\wedge \beta)\wedge\alpha\wedge \bar{\alpha}\wedge \omega^{n-p-1}    
		\end{align} 
	 	By \cite[Chapter III, Example 1.2]{Dembook}, $\sqrt{-1}^{p^2}	  \alpha\wedge \bar{\alpha}$ is a positive $(p,p)$-form in the sense of \cite[Chapter III, Definition 1.1]{Dembook}. 	 \cite[Chapter III, Corollary 1.9 \& Proposition 1.11]{Dembook} imply that $\sqrt{-1}^{p^2}\alpha\wedge \bar{\alpha}\wedge \omega^{n-p-1}$ is a positive $(n-1,n-1)$-form, hence \emph{strongly positive} (not strictly positive!) in the sense of  \cite[Chapter III, Definition 1.1]{Dembook}. 	  Since $d\beta$ is assumed to be a positive current, it follows that
		 $$
 	\sqrt{-1}^{p^2}	   d\beta \wedge \alpha\wedge \bar{\alpha}\wedge \omega^{n-p-1}
		 $$
		 is a  positive measure on $\widetilde{X}$.   \eqref{eq:small}    and \eqref{eq:essential}   imply that
		 $$ 
	\lim\limits_{i\to\infty}	\int_{\widetilde{X} } f_{j_i}	\sqrt{-1}^{p^2}	 d\beta \wedge \alpha\wedge \bar{\alpha}\wedge \omega^{n-p-1}=0.
		 $$ 
Recall that $0 \leq f_j \leq 1$,  and $ f_j(x) =1$ for any $x$ satisfying  $d_{\widetilde{X}}(x,x_0)\leq j-1-C$. The above equality together with the monotone convergence theorem imply that 
		\begin{align*} 
		\int_{\widetilde{X} }  	\sqrt{-1}^{p^2}	 d\beta \wedge \alpha\wedge \bar{\alpha}\wedge \omega^{n-p-1}=0.
		\end{align*}  
		It follows \eqref{eq:mutual} that 
\begin{align}\label{eq:pointwise}
\int_{\widetilde{X}}  	\sqrt{-1}^{p^2}	  \alpha\wedge \bar{\alpha} \wedge \omega^{n-p-1}\wedge \pi_X^*f^*T_Y=0. 
\end{align}

\medspace

\noindent {\bf Step 2.}  
We abusively write \( T_Y \) to denote \( \pi_Y^*T_Y \). Let \( y \in \widetilde{Y}^\circ \) be any point, and let \( x \in \widetilde{X} \) be any point in the fiber \( \tilde{f}^{-1}(y) \). Since \( f \) is smooth over \( Y^\circ \), we can find coordinate neighborhoods \( (U; z_1, \ldots, z_n) \) centered at \( x \) and \( (V; w_1, \ldots, w_m) \) centered at \( y \), such that 
\[
\tilde{f}(z_1, \ldots, z_n) = (z_1, \ldots, z_m).
\]
Since \( T_Y|_{Y^\circ} \) is a Kähler form on \( Y^\circ \), after applying a unitary change of coordinates, we may assume that 
\[
\omega(x) = \sqrt{-1} \sum_{j=1}^n dz_j \wedge d\bar{z}_j \quad \text{and} \quad T_Y(y) = \sum_{i=1}^m \lambda_i \sqrt{-1} dw_i \wedge d\bar{w}_i,
\]
where each \( \lambda_i \in \mathbb{R}_{>0} \).

	For any subset \( I = \{i_1, \ldots, i_p\} \subset \{1, \ldots, n\} \), we denote \( dz_I := dz_{i_1} \wedge \cdots \wedge dz_{i_p} \). Let \( \alpha \in H^{p,0}_{(2)}(\widetilde{X}) \). If we write
	\[
	\alpha|_U = \sum_{|I| = p} \alpha_I \, dz_I, \quad \text{with } \alpha_I \in \mathcal{O}(U),
	\]
	then we have
	\[
	\sqrt{-1}^{p^2} \, \alpha \wedge \bar{\alpha} \wedge \tilde{f}^*T_Y \wedge \omega^{n-p-1}(x)
	= \frac{(n - p - 1)!}{n!}\sum_{|I|=p} \sum_{j \in \{1, \ldots, m\} \setminus I} \lambda_j |\alpha_I(x)|^2 \, \omega(x)^n.
	\]
	By \eqref{eq:pointwise}, it follows that \( \{1, \ldots, m\} \subset I \) for any multi-index \( I \) with \( \alpha_I(x) \neq 0 \). Since \( x \) is arbitrary in the non-empty analytic open subset \( \widetilde{X}^\circ \subset \widetilde{X} \), we conclude that \( \alpha(x) = 0 \) for all \( x \in \widetilde{X}^\circ \) if $p<m$. As \( \alpha \) is a holomorphic \((p,0)\)-form, it follows that \( \alpha \equiv 0 \). This establishes \Cref{item:partial}.
	
	Moreover, if \( n > m \) and \( p \geq m \), then this also proves \eqref{eq:tensor}. Hence, \Cref{item:long} is proved.

 \medspace
 
 \noindent {\bf Step 3.}   
 Let us now prove \Cref{item:d-closed}.  
 For any point \( y \in \widetilde{Y}^\circ \), denote the fiber over \( y \) by \( F_y := \tilde{f}^{-1}(y) \).  
 Every nontrivial \( m \)-form \( \eta \in \Omega^m_{\widetilde{Y}^\circ,y} \) induces a \emph{unique} holomorphic \((p - m)\)-form \( \alpha_y \in H^0(F_y, \Omega^{p-m}_{F_y}) \) such that
 \[
 \alpha = \alpha_y \wedge \tilde{f}^*\eta.
 \]
 This form \( \alpha_y \) can be written explicitly. Using the coordinate charts \( (U; z_1, \ldots, z_{n+m}) \) and \( (V; w_1, \ldots, w_m) \) introduced in Step 2, we may write
 \[
 \eta = \lambda \, dw_1 \wedge \cdots \wedge dw_m,
 \]
 for some \( \lambda \in \mathbb{C}^* \) (note that \( \lambda \) depends on the choice of coordinates).
 
 For any multi-index \( I \supset \{1, \ldots, m\} \), denote by \( \tilde{I} := I \setminus \{1, \ldots, m\} \).  
 Then, by \Cref{item:long}, we have  
\begin{align}\label{eq:express2}
	 \alpha|_U=\sum_{|I|=p,I\supset\{1,\ldots,m\}}\alpha_Idz_{\tilde{I}}\wedge dz_1\wedge \cdots\wedge dz_m.
\end{align} 
Then the holomorphic \((p - m)\)-form \( \alpha_y \) on \( F_y \cap U \) is defined by
\begin{align}\label{eq:express}
	\alpha_y := \lambda^{-1} \cdot \sum_{|I| = p,\; I \supset \{1, \ldots, m\}} \alpha_I|_{F_y} \, dz_{\tilde{I}}.
\end{align}
Since \( \alpha_y \wedge \tilde{f}^*\eta = \alpha \), it follows that the definition of \( \alpha_y \) is independent of the choice of coordinates. Therefore, as we vary the coordinate open subsets, the local forms glue together to define a global holomorphic \((p - m)\)-form \( \alpha_y \) on \( F_y \).

Now, since \( \alpha \) is \( d \)-closed by assumption, the expression for \( \alpha \) implies
\begin{align}\label{eq:together}
	0 = d\alpha = \lambda^{-1} \cdot \sum_{\substack{|I| = p\\ I \supset \{1, \ldots, m\}}} \sum_{j = m + 1}^{n} \frac{\partial \alpha_I}{\partial z_j} \, dz_j \wedge dz_{\tilde{I}} \wedge dz_1 \wedge \cdots \wedge dz_m.
\end{align}
This yields
\[
d\alpha_y = \lambda^{-1} \cdot \sum_{\substack{|I| = p\\ I \supset \{1, \ldots, m\}}} \sum_{j = m + 1}^{n} \left( \frac{\partial \alpha_I}{\partial z_j} \bigg|_{F_y} \right) dz_j \wedge dz_{\tilde{I}} = 0.
\]
This proves \Cref{item:d-closed}, and completes the proof of the theorem. 
	\end{proof}

	We will need the following consequence of \Cref{thm:vanishing2} in the proof of \Cref{main:kollar}. 
	\begin{corollary}\label{prop:abelian}
		Let \( (X, \omega) \) be a compact Kähler $n$-fold, and let \( f: X \to A \) be a holomorphic map to an abelian variety \( A \) such that \( \dim f(X) = n \). Let \( \widetilde{X}_1 \) be a connected component of the fiber product \( X \times_A \widetilde{A} \), where \( \widetilde{A} \to A \) denotes the universal cover of \( A \). Then, for any infinite Galois cover \( \pi':\widetilde{X}' \to X \) that factors through \( \widetilde{X}_1 \), we have
		\[
		H^{p,0}_{(2)}(\widetilde{X}') = 0 \quad \text{for all } p \in \{0, \ldots, n - 1\}.
		\]
		\end{corollary}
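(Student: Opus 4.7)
The plan is to apply the proof of Theorem 2.2 directly on the cover $\widetilde{X}'$, once we produce a 1-form $\beta$ on $\widetilde{X}'$ with the required sublinear growth and positivity of $d\beta$. The abelian variety structure of $A$ furnishes this 1-form through the canonical quadratic potential of $\omega_A$ on $\widetilde{A}$.

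I begin by Stein-factorizing $f$ as $X \xrightarrow{h} Y \xrightarrow{\iota} A$, so that $h$ is proper, surjective with connected fibers, and $\iota$ is finite. Since $\dim f(X)=n$, the map $h$ is birational and $\dim Y=n$; since $A$ is projective and $\iota$ is finite, $Y$ is projective, hence compact K\"ahler normal. The smooth closed positive $(1,1)$-form $T_Y := \iota^{*}\omega_A$ is strictly positive on the non-empty open subset $Y_0\subset Y^{\mathrm{reg}}$ where $\iota$ is \'etale.

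On $\widetilde{A}$, identified with a complex vector space $V$, the form $\omega_A$ lifts to a translation-invariant K\"ahler form and so admits a smooth global potential $\phi(v)=\tfrac{1}{2}\|v\|^{2}_{\omega_A}$ with $dd^{c}\phi=\omega_A$. The 1-form $\eta:=d^{c}\phi$ on $V$ then satisfies $d\eta=\omega_A$ and $|\eta(v)|_{\omega_A}\leq C\|v\|$. Since $\widetilde{X}'$ factors through $\widetilde{X}_1$ and projection from $X\times_A \widetilde{A}$ gives a canonical holomorphic map $\widetilde{X}_1\to \widetilde{A}$, composition yields a holomorphic map $g:\widetilde{X}'\to V$. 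I set $\beta := g^{*}\eta$. Then $d\beta=g^{*}\omega_A=(\pi')^{*}h^{*}T_Y$, giving hypothesis (b) of Theorem 2.2. For hypothesis (a), compactness of $X$ and smoothness of $f$ make $|df|$ globally bounded; hence $g$ is Lipschitz for the metrics $(\pi')^{*}\omega$ and $\omega_A$, whence $\|g(x)\|_{\omega_A}\leq C\cdot d_{\widetilde{X}'}(x,x_0)+C'$, and consequently
\[
|\beta(x)|_{(\pi')^{*}\omega}\leq C''\bigl(d_{\widetilde{X}'}(x,x_0)+1\bigr).
\]

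To conclude, I observe that although Theorem 2.2 is formulated on the universal cover $\widetilde{X}$, its proof uses only three ingredients: the completeness of the cover with bounded sectional curvature (so that Shi's exhausting function $r$ exists and the distributional Stokes argument is valid), a 1-form $\beta$ with properties (a)--(b), and a base map $h:X\to Y$ with connected fibers. All three hold verbatim on $\widetilde{X}'$, since $\widetilde{X}'\to X$ is Galois with $X$ compact. Thus the argument establishing Theorem 2.2(i) applies and produces $H^{p,0}_{(2)}(\widetilde{X}')=0$ for $p\in\{0,\dots,n-1\}$. The only subtlety of the proof is precisely this transfer from the universal cover to the intermediate Galois cover $\widetilde{X}'$, and this is a formal matter because the construction of $\beta$ and the $L^{2}$--Stokes argument are both intrinsic to whichever cover of $X$ one chooses to work on.
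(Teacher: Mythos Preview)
Your proof is correct and follows essentially the same approach as the paper: pull back the quadratic potential of $\omega_A$ from $\widetilde{A}$ via the lift $g:\widetilde{X}'\to\widetilde{A}$ to obtain a 1-form $\beta$ with sublinear growth and $d\beta=(\pi')^*f^*\omega_A$, then invoke Theorem~2.2(i). The paper omits the Stein factorization (it simply takes $T_Y=f^*\omega_A$ on $Y=X$) and cites Theorem~2.2(i) without commenting on the passage from the universal cover to the intermediate Galois cover $\widetilde{X}'$, a point you correctly make explicit.
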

		
		\begin{proof}
	 We choose global linear coordinates \( (z_1, \ldots, z_m) \) on \( \widetilde{A} \) such that the standard Kähler form
		\[
		\omega_{\widetilde{A}} := \sqrt{-1} \sum_{i=1}^{m} dz_i \wedge d\bar{z}_i
		\]
		descends to a Kähler form \( \omega_A \) on \( A \). Since \( \widetilde{X}' \) dominates \( \widetilde{X}_1 \), the map \( f: X \to A \) lifts to a holomorphic map \( g: \widetilde{X}' \to \widetilde{A} \). 
		Set \( g_i := z_i \circ g \) for each \( i \); this defines a holomorphic function on \( \widetilde{X}' \).  
		
		We consider the smooth real \((1,0)\)-form
		\[
		\beta := g^*\left( \sqrt{-1} \, \bar{\partial} \sum_{i=1}^{m} |z_i|^2 \right) 
		= \sqrt{-1} \sum_{i=1}^{m} g_i(x) \, \overline{\partial g_i(x)}.
		\]
		Since each \( dz_i \) descends to a holomorphic 1-form on \( A \), it follows that each \( \partial g_i(x) \) descends to a holomorphic 1-form on \( X \). Therefore, there exists a constant \( C > 0 \) such that
		\[
		|\partial g_i(x)|_{\pi'^* \omega} \leq C
		\quad \text{for all } x \in \widetilde{X}' \text{ and } i = 1, \ldots, m.
		\]
		In particular, the map \( g \) is Lipschitz. Fix a base point \( x_0 \in \widetilde{X}' \). Then there exists a constant \( C' > 0 \) such that
		\[
		|g_i(x)| \leq C' \left( d_{\widetilde{X}'}(x, x_0) + 1 \right)
		\quad \text{for all } x \in \widetilde{X}'.
		\]
		It follows that
		\[
		|\beta(x)|_{\pi'^* \omega} \leq m C C' \left( d_{\widetilde{X}'}(x, x_0) + 1 \right).
		\]
		
		Note that
		\[
		d\beta = (\pi')^* f^* \omega_A,
		\]
		where \( f^* \omega_A \) is a closed semi-positive \((1,1)\)-form on \( X \), which is strictly positive on a non-empty Zariski open subset where \( f \) is immersive.
		
		By \Cref{item:partial}, this implies the desired \( L^2 \)-vanishing result.
	 	\end{proof}

	\section{Constructing  1-forms via harmonic maps to Euclidean buildings}\label{sec:1-form}  
	Let $K$ be a non-archimedean local field of characteristic zero and let $G$ be a reductive algebraic group defined over $K$. There exists a Euclidean building associated with $G$, which is  the enlarged Bruhat-Tits building and denoted by $\Delta(G)$.  We refer the readers to \cite{KP23,Rou23} for the definition and properties of Bruhat-Tits buildings.

	Let $(V,W,\Phi)$ be the root system associated with $\Delta(G)$. It means that $V$ is a real Euclidean space endowed with a Euclidean metric and $W$ is an affine Weyl group acting on $V$. Namely, $W$ is a semidirect product   $T\rtimes W^v$,  where $W^v$ is the \emph{vectorial Weyl group}, which is a finite group generated by reflections on $V$, and $T$ is a translation group of $V$.  Here $\Phi$ is the root system of $V$.  It is a finite set of $V^*\backslash \{0\}$ such that $W^v$ acts on $\Phi$ as a permutation.  Moreover, $\Phi$ generates $V^*$.  From the reflection hyperplanes of $W$ we obtain a decomposition of $V$ into facets. Let $\cH$ be set of  hyperplanes of $V$ defined by $w\in W$.  The maximal facets, called \emph{chambers}, are the open connected components of $V\backslash \cup_{H\in \cH}$.

For any apartment $A$ in $ \Delta(G)$,  there exists an isomorphism $i_A:A\to V$, which is  called a chart. For two charts $i_{A_1}:A_1\to V$ and $i_{A_2}:A_2\to V$, if $A_1\cap A_2\neq\varnothing$,   it satisfies the following properties:
	\begin{enumerate}[label=(\alph*)]
		\item $Y:=i_{A_2}(i_{A_1}^{-1}(V))$ is convex.
		\item \label{item:weyl}There is an element $w\in W$ such that $w\circ i_{A_1}|_{A_1\cap A_2}=i_{A_2}|_{A_1\cap A_2}$.
	\end{enumerate} 
	The charts allow us to map facets into $\Delta(G)$ and their images are
	also called facets. The  axioms guarantee that these
	notions are chart independent. 

Let $X$ be a compact K\"ahler manifold and 	let $\varrho:\pi_1(X)\to G(K)$ be a Zariski dense representation.   	By the work of Gromov-Schoen \cite{GS92} (see also \cite{BDDM} for the quasi-projective case), there exists a $\varrho$-equivariant harmonic mapping $u:\widetilde{X}\to \Delta(G)$.  Such $u$ is moreover pluriharmonic.  	We denote by $R(u)$ the regular set of harmonic map $u$. That is, for any $x\in R(u)$, there exists  an open subset $\Omega_{x}$ containing $x$ such that $u(\Omega_{x})\subset A$ for some apartment $A$.   
Since $G(K)$ acts transitively on the apartments of $\Delta(G)$,    we know that $R(u)$ is the pullback of an open subset $X'$ of $X$. By \cite{GS92}, $X\backslash X'$  has \emph{Hausdorff codimension} at least two.  

 We fix  an orthogonal coordinates $(x_1,\ldots,x_N)$ for $V$.  Define   smooth real functions on $\Omega_x$ by setting
\begin{align}\label{eq:ui}
 u_{A,i}:=x_i\circ i_A\circ u.
\end{align}   The pluriharmonicity of $u$ implies that $\hess u_{A,i}=0$. We consider a smooth real semi-positive $(1,1)$-form on $\Omega_x$ defined by 
\begin{align}\label{eq:form}
   \sqrt{-1}\sum_{i=1}^{N}\partial u_{A,i}\circ \db u_{A,i}. 
 \end{align}
By \cite[\S 3.3.2]{Eys04},  such real $(1,1)$-form does not depend on the choice of $A$, and is invariant under $\pi_1(X)$-action. Therefore, it descends to a smooth real closed semi-positive $(1,1)$-form on $X'$.  
 It is shown in \cite{Eys04} that it extends to a  positive closed $(1,1)$-current $T_{\varrho}$  on $X$ with continuous potential.  
\begin{definition}[Canonical current]\label{def:canonical}
	The above closed positive $(1,1)$-current 	$T_\varrho$ on $X$ is called the \emph{canonical current} of $\varrho$.   
\end{definition}  
\begin{remark}
	Although the $\varrho$-equivariant harmonic map $u$ may not be unique, the unicity result in \cite{DM24} implies that the  $(1,1)$-form \eqref{eq:form} is independent of the choice of $u$. Since the current $T_\varrho$ is defined as the minimal extension of this $(1,1)$-form, it follows that $T_\varrho$ is uniquely determined and independent of the choice of $u$.
\end{remark}
By \cite{Eys04,CDY22,DY23}, one has  a proper fibration  $s:X\to S_{\varrho}$ associated with $\varrho$,  which is called \emph{Katzarkov-Eyssidieux reduction map}. It has the following properties. 
\begin{proposition}[\cite{Eys04,CDY22}]\label{thm:KE}
	Let $X$ be a smooth projective variety and let $\rho:\pi_1(X)\to G(K)$ be a  Zariski dense representation where $G$ is a reductive group over  a non-archimedean local field $K$.  Then there exists a proper morphism $s_\rho:X\to S_\rho$ (so-called Katzarkov-Eyssidieux reduction map) onto a normal projective variety with connected fibers  such that  for any irreducible closed subvariety $Z\subset X$, the following properties are equivalent: 
	\begin{enumerate}
		\item\label{item:KZ1} $\rho({\rm Im}[\pi_1(Z_{\rm norm})\to \pi_1(X)])$ is bounded;
		\item \label{item:KZ3} $\rho({\rm Im}[\pi_1(Z)\to \pi_1(X)])$ is bounded;
		\item \label{item:KZ2}  $s_\rho(Z)$ is a point.
	\end{enumerate}  
	Moreover, there exists a $(1,1)$-current $T_\varrho'$ with continuous potential on $S_\varrho$ such that $s_\varrho^*T_\varrho'=T_\varrho$. \qed
\end{proposition}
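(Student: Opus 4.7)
The plan is to build on the $\rho$-equivariant pluriharmonic map $u\colon \widetilde{X}\to \Delta(G)$ of Gromov--Schoen \cite{GS92} and the associated canonical current $T_\rho$ (denoted $T_\varrho$ in \Cref{def:canonical}), following the construction of Eyssidieux \cite{Eys04} with the refinements from \cite{CDY22}.

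I would first settle the equivalence (1) $\Leftrightarrow$ (2). For an irreducible projective variety $Z$, a standard argument shows that $\rho(\mathrm{Im}[\pi_1(Z^{\mathrm{norm}})\to\pi_1(X)])$ has finite index in $\rho(\mathrm{Im}[\pi_1(Z)\to\pi_1(X)])$, so boundedness in $G(K)$ transfers between the two. The essential input is the equivalence (2) $\Leftrightarrow$ (3), for which the Bruhat--Tits fixed point theorem is the key geometric tool: since $\Delta(G)$ is a complete CAT(0) space, any bounded subgroup of $G(K)$ fixes a point, namely the circumcenter of any bounded orbit. Assuming (2), I would restrict $u$ to a connected component $\widetilde{Z}^\circ$ of $\pi_X^{-1}(Z)$; this restriction is equivariant for a bounded representation and hence lands in a bounded convex subset of $\Delta(G)$. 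Pulling back via a desingularization of $Z$ and invoking the rigidity of pluriharmonic maps from compact K\"ahler manifolds into CAT(0) targets with precompact image forces $u|_{\widetilde{Z}^\circ}$ to be constant. Constancy of $u$ on $\widetilde{Z}^\circ$ in turn makes $T_\rho$ vanish along $Z$, which by the construction of $s_\rho$ described below implies $s_\rho(Z)$ is a point. The converse is immediate: if $s_\rho$ contracts $Z$ to a point, then $u$ is constant on $\widetilde{Z}^\circ$, whence $\rho(\mathrm{Im}[\pi_1(Z)\to\pi_1(X)])$ fixes a point of $\Delta(G)$ and is therefore bounded.

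The main obstacle is the algebraic construction of $s_\rho\colon X\to S_\rho$ as a proper morphism onto a normal projective variety with connected fibers. The intended fibers are the maximal irreducible analytic subvarieties along which $u$ is constant, equivalently, along which $T_\rho$ degenerates. Following \cite{Eys04}, the existence of a projective quotient exploits crucially that $T_\rho$ admits a continuous potential: the null directions of $T_\rho$ organize into a distribution whose leaves are closed analytic subvarieties varying algebraically. To realize this geometrically as a morphism, one considers a suitable combination of the cohomology class of $T_\rho$ with an ample class, takes the associated Iitaka-type fibration, performs Stein factorization, and normalizes. Verifying that the resulting map has exactly the null leaves of $T_\rho$ as fibers, and is independent of the auxiliary ample choice, constitutes the technical heart of the construction in \cite{Eys04, CDY22}.

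Finally, for the descent $s_\rho^*T_\rho' = T_\rho$, I would work locally over the open subset of $S_\rho$ over which $s_\rho$ is smooth. Since $u$ is constant on fibers of $s_\rho$ by the equivalence established above, the continuous local potentials of $T_\rho$ are constant along those fibers and descend to continuous plurisubharmonic potentials on the image open subset of $S_\rho^{\mathrm{reg}}$. Extension across the codimension-$\geq 2$ complement (including $S_\rho^{\mathrm{sing}}$) then follows from the classical removable singularity theorem for plurisubharmonic functions on normal analytic spaces, yielding the desired closed positive current $T_\rho'$ with $s_\rho^*T_\rho' = T_\rho$.
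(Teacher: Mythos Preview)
The paper does not supply a proof of this proposition: the statement is attributed to \cite{Eys04,CDY22} and closed with a \qed immediately after the enunciation, so there is nothing to compare against beyond the cited references. Your outline is a reasonable summary of the strategy in those works, with one exception.

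Your direct argument for (1) $\Leftrightarrow$ (2) via a finite-index claim is incorrect. Take $Z$ to be a rational curve with a single node: then $\pi_1(Z^{\mathrm{norm}})=\pi_1(\mathbb{P}^1)$ is trivial while $\pi_1(Z)\cong\mathbb{Z}$, so $\rho(\mathrm{Im}[\pi_1(Z^{\mathrm{norm}})\to\pi_1(X)])$ is trivial and can have infinite index in $\rho(\mathrm{Im}[\pi_1(Z)\to\pi_1(X)])$; no ``standard argument'' transfers boundedness here at the group level. The equivalence of (1) and (2) in the cited works is obtained by cycling through (3): the implication (2) $\Rightarrow$ (1) is trivial (subgroup of a bounded group); (1) $\Rightarrow$ (3) follows by pulling $u$ back to a desingularization of $Z^{\mathrm{norm}}$ and applying the rigidity argument you already describe, which forces $T_\rho|_Z=0$ and hence $s_\rho(Z)$ to be a point; and (3) $\Rightarrow$ (2) holds because constancy of $u$ on a connected component of $\pi_X^{-1}(Z)$ forces every $\rho(\gamma)$ with $\gamma\in\mathrm{Im}[\pi_1(Z)\to\pi_1(X)]$ to fix the common image point, hence to lie in a compact stabilizer. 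The remainder of your sketch---the construction of $s_\rho$ from the null directions of $T_\rho$ and the descent of the continuous local potentials to $S_\rho$---aligns with the approach in \cite{Eys04,CDY22}.
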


	\begin{proposition}\label{lem:equal}
Let $x_0$ be a fixed base point in $\widetilde{X}$. For the canonical current $T_\varrho$ defined in \cref{def:canonical}, we have
	\begin{align}\label{eq:equal} 
 \hess d^2_{\Delta(G)}(u(x),u(x_0)) \geq\pi_X^*T_\varrho,
	\end{align} 
	where $d_{\Delta(G)}(\bullet,\bullet)$ is the distance function on $\Delta(G)$. 
\end{proposition}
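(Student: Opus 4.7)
My plan is to establish the inequality first on the regular locus $R(u)\subset \widetilde{X}$ of the harmonic map $u$, and then extend across the singular set using its Hausdorff codimension--two estimate due to Gromov--Schoen. Write $\phi(x):=d^2_{\Delta(G)}(u(x),u(x_0))$, which is continuous on $\widetilde{X}$ (since $u$ and $d_{\Delta(G)}$ are continuous). The task is to show $\hess\phi\geq \pi_X^* T_\varrho$ as $(1,1)$-currents on $\widetilde{X}$.

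On $R(u)$ the key tool is the distance-squared convexity for pluriharmonic maps into CAT(0) targets, a standard analytic upgrade of the convexity of $d^2(\cdot,P)$ along geodesics in CAT(0) spaces (going back to Korevaar--Schoen and Gromov--Schoen): for any pluriharmonic map $v\colon M\to Y$ from a K\"ahler manifold to a CAT(0) space and any fixed $P\in Y$, the function $d_Y^2(v(\cdot),P)$ is plurisubharmonic and satisfies
\begin{align*}
\hess d_Y^2(v(\cdot),P)\geq 2 T_v
\end{align*}
as $(1,1)$-currents, where $T_v$ denotes the pluriharmonic energy $(1,1)$-form of $v$. Since $\Delta(G)$ is CAT(0) and $u$ is pluriharmonic, applying this to $u$ with $P=u(x_0)$ yields $\hess\phi\geq 2T_u$ on $R(u)$. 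To identify $T_u$ with $\pi_X^*T_\varrho$ on $R(u)$, I would use the fact that near any $x\in R(u)$ with $u(\Omega_x)\subset A$, the orthonormality of the coordinates $(x_1,\ldots,x_N)$ on $V$ gives $T_u=\sqrt{-1}\sum_i \partial u_{A,i}\wedge \db u_{A,i}$, which matches $\pi_X^*T_\varrho$ by \Cref{def:canonical}. Hence $\hess\phi\geq 2\pi_X^*T_\varrho\geq \pi_X^*T_\varrho$ on $R(u)$.

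For the global extension, fix any sufficiently small open $U\subset \widetilde{X}$. The current $\pi_X^*T_\varrho$ admits a continuous local potential $\tilde\psi$ on $U$, since $T_\varrho$ has continuous potential on $X$ by construction. Then $\phi-\tilde\psi$ is continuous on $U$ and plurisubharmonic on $U\cap R(u)$. Because $\widetilde{X}\setminus R(u)$ has Hausdorff codimension $\geq 2$ by Gromov--Schoen, the standard removable-singularity theorem for continuous plurisubharmonic functions across sets of Hausdorff codimension at least two (see e.g. \cite{Dembook}) implies that $\phi-\tilde\psi$ is plurisubharmonic on the whole of $U$. This is exactly the inequality $\hess\phi\geq \pi_X^*T_\varrho$ as $(1,1)$-currents on $\widetilde{X}$.

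The main obstacle, I expect, is promoting the distance-squared convexity from a pointwise/Laplacian statement to an inequality of $(1,1)$-currents. The delicate feature is that $u(x)$ and $u(x_0)$ generically do not lie in a common apartment into which a whole neighborhood of $x$ maps, so the naive apartment-coordinate identity $\phi(y)=\sum_i(u_{A,i}(y)-u_{A,i}(x_0))^2$ is not valid on an open set and a direct coordinate computation does not immediately yield the required current inequality. One has to either appeal to the CAT(0) comparison framework of Korevaar--Schoen/Gromov--Schoen, which packages all configurations uniformly, or carry out a careful apartment-switching argument using the Weyl-group compatibility of charts recorded in item (b) preceding the statement.
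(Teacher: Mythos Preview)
Your two-step strategy---prove the inequality on $R(u)$, then extend across the Hausdorff codimension $\geq 2$ singular set by subtracting a continuous local potential of $T_\varrho$ and invoking removable singularities for continuous psh functions---is exactly the paper's approach (the paper cites Shiffman \cite{Shi72} for the extension step). Where you hesitate, the paper is concrete: rather than appealing to a packaged current-level CAT(0) convexity statement, it restricts to arbitrary holomorphic curves $\iota\colon C\to\Omega_x$ and applies the scalar Laplacian estimate \cite[Proposition~2.2, p.~191]{GS92} to get $\Delta\, d^2_{\Delta(G)}(u\circ\iota(\cdot),u(x_0))\geq |\nabla(u\circ\iota)|^2$, then identifies $|\nabla(u\circ\iota)|^2\,\iota^*\omega=\iota^*\bigl(\sqrt{-1}\sum_i\partial u_{A,i}\wedge\db u_{A,i}\bigr)$. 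Since semipositivity of a real $(1,1)$-form can be tested on holomorphic curves, this yields $\hess\phi\geq\pi_X^*T_\varrho$ on $R(u)$. Crucially, the Gromov--Schoen Laplacian estimate does not require $u(x_0)$ to lie in the apartment $A$ containing $u(\Omega_x)$, so the apartment-switching difficulty you anticipated never arises.
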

\begin{proof}
We assume that $x_0\in R(u)$.  Let $\omega$ be a K\"ahler metric on $X$. 	For any $x\in R(u)$ where $R(u)$ is the regular set of the harmonic map $u$, there exists  an open subset $\Omega_{x}$ containing $x$ such that $u(\Omega_{x})\subset A$ for some apartment $A$. 
	
Note that $ {d}_{\Delta(G)}$ is   $G(K)$-invariant.  Let $\iota:C\to \Omega_x$ be any holomorphic curve.  Then by \cite[Proposition 2.2 in p. 191]{GS92},  we have
	$$
	\Delta d^2(u\circ \iota (x), u(x_0) )\geq |\nabla u\circ \iota|^2,
	$$
	where the  $|\nabla u\circ \iota|^2$ is the norm defined with respect to  $d_{\Delta(G)}$ and $\omega$.  It follows that 
	$$
	|\nabla u\circ \iota|^2 \iota^*\omega= \iota^*(\sqrt{-1}\sum_{i=1}^{N}\partial u_{A,i}\wedge\db u_{A,i})
	$$
	where $u_{A,i}$ is defined in \eqref{eq:ui}. 
	Therefore, over $\Omega_x$ we have
	$$\hess  {d}_{\Delta(G)}^2(u(x), u(x_0) )\geq \sqrt{-1}\sum_{i=1}^{m}\partial u_{A,i}\wedge\db u_{A,i}.$$
By \cref{def:canonical}, we have  \eqref{eq:equal} over the whole $R(u)$. 
	\begin{claim}
\eqref{eq:equal} holds over the whole $\widetilde{X}$.
	\end{claim}
	\begin{proof}
		 Since $u$ is  Lipschitz, it follows that   $d^2_{\Delta(G)}(u(x),u(x_0))$ is a continuous function on $\widetilde{X}$. Recall that $T_\varrho$ is a positive closed $(1,1)$-current with continuous potential. This implies that for any point $x\in \widetilde{X}$, there exist a neighborhood $\Omega_x$ and a continuous function $\phi$ on $\Omega_x$ such that 
		 $$
\hess d^2_{\Delta(G)}(u(x),u(x_0))- \pi_X^*T_\varrho=\hess \phi 
		 $$
		 on $\Omega_x$. Note that $\hess\phi\geq 0$ over $\Omega_x\cap R(u)$. Since the complement of $\Omega_x\cap R(u)$ has Hausdorff codimension at least two in $\Omega_x$, we apply the extension theorem in \cite[Theorem 3.1(i)]{Shi72} to conclude that there is a  psh function $\phi'$ on $\Omega_x$ such that $\phi'|_{\Omega_x\cap R(u)}=\phi|_{\Omega_x\cap R(u)}$.    Since  $\phi$ is continuous,  and $\phi'$ is upper semi-continuous, it follows that   $\phi=\phi'$ on $\Omega_x$.  This shows that \eqref{eq:equal} holds over $\Omega_x$, hence over the whole $\widetilde{X}$.  The claim is proved. 
	\end{proof}
		The proposition is proved.   
\end{proof}	  

Let us denote by
\begin{align}\label{eq:defbeta}
	\beta_\varrho(x):=\sqrt{-1}\db d^2_{\Delta(G)}(u(x),u(x_0)).
\end{align}
Since $d^2_{\Delta(G)}(u(x),u(x_0))$ is a psh function on $\widetilde{X}$ by \cref{lem:equal}, by \cite[Theorem 1.46]{GZ17}, $\beta_\varrho$ has $L_{\rm loc}^1$-coefficients. 
 \begin{lemma}\label{lem:distance}
 There exists a dense open subset $\widetilde{X}^\circ $ of $\widetilde{X}$ whose complement has zero Lebesgue measure such that $\beta_\varrho$ is smooth. Moreover, there exists  	a number $c>0$ such that for any $x\in \widetilde{X}^\circ$, we have
 	\begin{align*}
 		|\beta_\varrho(x)|_{\pi_X^*\omega} \leq  c(1+d_{\widetilde{X}}(x, x_0)).
 	\end{align*}  Here $\omega$ is a K\"ahler metric on $X$.
 \end{lemma}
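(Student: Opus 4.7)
The plan is to combine the uniform Lipschitz property of $u$ with the piecewise-smooth structure of the squared distance function on $\Delta(G)$. The two key inputs are: (a) by \cite{GS92}, the pluriharmonic map $u$ is locally Lipschitz, and since $u$ is $\varrho$-equivariant and $X$ is compact, the Lipschitz constant descends to a uniform bound, i.e.\ there exists $L > 0$ such that $d_{\Delta(G)}(u(x), u(y)) \leq L \cdot d_{\widetilde{X}}(x, y)$ for all $x,y \in \widetilde{X}$; (b) the complement of the regular set $R(u)$ has Hausdorff codimension at least two, hence Lebesgue measure zero.

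For the smoothness assertion, I would argue as follows. On any regular chart $(\Omega_y, A)$ with $y \in R(u)$ and $u(\Omega_y) \subset A$, the restriction to $A$ of the convex function $d^2_{\Delta(G)}(\cdot, u(x_0))$ is piecewise smooth, being smooth away from a locally finite union of affine walls across which the geodesic to $u(x_0)$ bends. Since $u|_{\Omega_y}$ is real-analytic (each $u_{A,i}$ in \eqref{eq:ui} is pluriharmonic, hence real-analytic), the preimage of this singular locus is a locally finite union of real-analytic subsets of $\Omega_y$ of positive real codimension, and hence is Lebesgue-negligible. Let $\widetilde{X}^\circ \subset R(u)$ be the open complement of this singular set; then $\widetilde{X}^\circ$ is open and dense, its complement in $\widetilde{X}$ has Lebesgue measure zero, and $\beta_\varrho$ is smooth on $\widetilde{X}^\circ$.

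For the norm bound, at a point $y \in \widetilde{X}^\circ$, invoking the building axiom that any two points lie in a common apartment, I choose an apartment $A'$ containing both $u(y)$ and $u(x_0)$, and shrink $\Omega_y$ so that $u(\Omega_y) \subset A'$ (possible since $u(y)$ is off all walls). Taking orthonormal coordinates $(x_1,\ldots,x_N)$ on $A'$ induced by the Euclidean structure of $V$, we have
\[
d^2_{\Delta(G)}(u(z), u(x_0)) = \sum_{i=1}^N \bigl(u_{A', i}(z) - u_{A', i}(x_0)\bigr)^2,
\]
and therefore
\[
\beta_\varrho(z) = 2\sqrt{-1}\sum_{i=1}^N \bigl(u_{A', i}(z) - u_{A', i}(x_0)\bigr)\,\db u_{A', i}(z).
\]
By Cauchy--Schwarz and orthonormality, $|u_{A',i}(z) - u_{A',i}(x_0)| \leq d_{\Delta(G)}(u(z), u(x_0)) \leq L\, d_{\widetilde{X}}(z,x_0)$, while the uniform Lipschitz bound on $u$ together with the identity $|\nabla u|^2 = 2\sum_i |du_{A', i}|^2_{\pi_X^*\omega}$ gives $|\db u_{A', i}|_{\pi_X^*\omega} \leq L$ pointwise. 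Combining these estimates produces the desired inequality $|\beta_\varrho(z)|_{\pi_X^*\omega} \leq c\bigl(1 + d_{\widetilde{X}}(z, x_0)\bigr)$.

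The main obstacle is the smoothness step: concretely verifying that the singular locus of $d^2_{\Delta(G)}(\cdot, u(x_0))$ inside any apartment is exactly a locally finite union of affine subsets of positive codimension, so that its preimage under the real-analytic map $u|_{\Omega_y}$ is measure zero. This rests on the piecewise affine structure of geodesics in $\Delta(G)$ governed by the Weyl group, together with the fact that Zariski density of $\varrho$ and non-constancy of $u$ prevent $u$ from being locally constant on any open set, so that these preimages are genuinely of positive codimension.
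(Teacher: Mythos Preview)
Your overall strategy and the final Lipschitz estimate are correct once the common apartment $A'$ is in hand, but there is a genuine gap in the step ``shrink $\Omega_y$ so that $u(\Omega_y)\subset A'$ (possible since $u(y)$ is off all walls)''. The set you removed to form $\widetilde{X}^\circ$ is the preimage of the singular locus of $d^2_{\Delta(G)}(\cdot,u(x_0))|_A$, and this is \emph{not} the union of Weyl walls of $A$: for instance, if $u(x_0)$ happens to lie in $A$ then the squared distance is smooth on all of $A$, yet $A$ still carries its Weyl walls. The existence of an apartment $A'$ containing both $u(x_0)$ and an $A$-open neighbourhood of $u(y)$ requires $u(y)$ to lie in the interior of a chamber (so that every apartment through $u(y)$ contains that entire chamber); being off the distance-singular locus does not furnish this. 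Removing in addition the preimages of all Weyl walls does not obviously yield a set of full measure either, since nothing in your argument rules out $u$ mapping an open subset of $\widetilde X$ entirely into a wall; Zariski density of $\varrho$ prevents $u$ from being constant, but not from having image in a proper sub-flat.

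The paper sidesteps both issues by invoking a dedicated regularity result for pluriharmonic maps into buildings \cite[Proposition~4.15]{DM24}: the set of $x\in\widetilde{X}$ for which $u$ sends some neighbourhood $\Omega_x$ into the \emph{closure of a single chamber} $\overline{\cC}$ is open, dense, and of full Lebesgue measure. This formulation is robust to $u$ landing in a face of $\cC$. On this set one applies the building axiom that a chamber and a point always lie in a common apartment to obtain $A\supset\overline{\cC}\cup\{u(x_0)\}$; the explicit Euclidean formula for $d^2_{\Delta(G)}(u(\cdot),u(x_0))$ then holds on $\Omega_x$, and your Lipschitz bounds and the final inequality go through exactly as you wrote them.
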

 \begin{proof}
 Let $\widetilde{X}^\circ$ be the set of points $x$ in $\widetilde{X}$ such that there exists an open neighborhood $\Omega_x$ containing $x$ such that $u(\Omega_x)$ is contained in the closure of a chamber $\cC$ of the building $\Delta(G)$.  By \cite[Proposition 4.15]{DM24}, $\widetilde{X}^\circ$ is a dense open subset such that $\widetilde{X}\backslash \widetilde{X}^\circ$  has zero Lebesgue measure. Note that there exists an apartment $A$ containing both $\overline{\cC}$ and $x_0$. It follows that for any $y\in \Omega_{x}$, we have
 \begin{align}\label{eq:distanced}
 	d^2_{\Delta(G)}(u(y),u(x_0))=\sum_{i=1}^{N}|u_{A,i}(y)-u_{A,i}(x_0)|^2.
 \end{align}
 Then  by \eqref{eq:distanced} and \eqref{eq:defbeta}, one has
 	$$
 \beta_\varrho(y)= \sqrt{-1} \sum_{i=1}^{N} (u_{A,i}(y)-u_{A,i}(x_0))\db u_{A,i}(y).
 	$$
 	Since $u$ is   Lipschitz and $\varrho$-equivariant,  it follows that there exists a uniform  constant $c_1>0$ such that for any $y\in \widetilde{X}$, we have $$d_{\Delta(G)}(u(y),u(x_0))\leq  c_1(1+d_{\widetilde{X}}(y, x_0)).$$  
 Note that for any $y\in \Omega_x$, we have
 	$$
 	|u_{A,i}(y)-u_{A,i}(x_0)|\leq  d_{\Delta(G)}(u(y),u(x_0)).
 	$$ 
 	On the other hand, by the Lipschitz condition of $u$, there exists another uniform constant $c_2>0$ such that for any $y\in \Omega_x$, we have
 	$$
 	\sum_{i=1}^{N}|\db u_{A,i}(y)|_{\pi^*\omega}\leq  c_2.
 	$$
 	In conclusion, we have
 	\begin{align*}
 		|\beta_\varrho(y)|_{\pi_X^*\omega} \leq    c_1c_2(1+d_{\widetilde{X}}(y, x_0))\quad \mbox{for any }y\in \Omega_x.
 	\end{align*} 
 	Since $x$ is any point in $\widetilde{X}^\circ$, the above inequality holds for any $x\in \widetilde{X}^\circ$.  The lemma is proved. 
 \end{proof}

	\section{$L^2$-vanishing theorem and generically large  local systems}\label{sec:linear}
 In this section we will prove \cref{main:kollar}.  In \Cref{subsec:semisimple},   we address the case where $\varrho$ is semisimple, utilizing techniques from the proof of the reductive Shafarevich conjecture in   \cite{Eys04,DY23}.  The desired 1-form $\beta$, as required in \Cref{thm:vanishing2},  arises from   1-forms $\beta_\tau$ associated with $\tau:\pi_1(X)\to \GL_N(K)$ defined in \eqref{eq:defbeta}, where $K$ is a non-archimedean local field.   We then reduce the proof   to \Cref{item:partial}.

 For the proof of the general cases of \cref{main:kollar}, we will apply techniques from the proof of the linear Shafarevich conjecture in \cite{EKPR12}.   
 Using similar techniques as in the semi-simple case,   we first  construct a suitable fibration $f:X\to Y$ (the reductive Shafarevich morphism ${\rm sh}_M^0:X\to {\rm Sh}_M^0(X)$)  such that the conditions in \cref{thm:vanishing2} are fulfilled. Moreover, by the structure of the linear Shafarevich morphism, for  \emph{almost every}  smooth fiber  of $f$, the conditions in \cref{prop:abelian} are satisfied.  
 
 Therefore, if there exists a non-trivial $\alpha\in H^{p,0}_{(2)}(\widetilde{X})$ for some $p\in \{0,\ldots,\dim X-1\}$,  we can  apply \cref{item:partial}  to show that $p\geq m:=\dim Y$, and use \cref{item:d-closed} to obtain a non-trivial $L^2$ holomorphic $(p-m)$-form on the universal cover of  at least one  smooth fiber  of $f$. Finally, we apply \cref{prop:abelian} to obtain a contradiction.  
	
 	\Cref{subsec:semisimple} is covered by \Cref{subsec:linear}, and readers can skip it if they prefer to proceed directly to the proof of the general case of \cref{main:kollar}. 
 	\subsection{Two technical results}
 	 	Note that in \eqref{eq:defbeta}, we construct certain  1-forms $\beta_{\tau_i}$ on $\widetilde{X}$ associated with the non-archimedean representations $\tau_i$   in \cref{thm:Sha}. For the $\bC$-VHS $\cL$, there is also another way to construct  similar 1-forms. This was established by Eyssidieux in \cite{Esy97} and we recollect some facts  therein. 
 	\begin{proposition}[\protecting{\cite[Proposition 4.5.1]{Esy97}}]\label{prop:eys}
 		Let $X$ be a smooth projective variety and let $\cL$ be a $\bC$-VHS on $X$. Let $\cD$ be the period domain of $\cL$ and let $f:\widetilde{X}\to \cD$ be the period map. Let $q:\cD\to \cR$ be the natural quotient where $\cR$ is the corresponding Riemannian symmetric space of $\cD$. Define $\omega_\cL:=\sqrt{-1}{\rm tr}(\theta\wedge\theta^*)$ as in \cref{thm:Sha}, which is a smooth closed positive $(1,1)$-form. Then there exists a smooth  function $\psi_{\cD}:\cR\to \bR_{>0}$ and  constants $c>0$   depending  only on $\cD$  such that  the function $\phi:=\psi_{\cD}\circ q\circ f$ is smooth and plurisubharmonic and we have
 		\begin{align}\label{eq:eys}
 			\sqrt{-1}\partial\phi\wedge\db\phi&\leq \pi_X^*\omega_\cL,\\\label{eq:eys2}
 			c\pi_X^*\omega_\cL&\leq 	\hess \phi.
 		\end{align}  \qed
 	\end{proposition}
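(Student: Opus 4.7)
The plan is to reduce the statement to the construction of a suitable function $\psi_\cD$ on $\cR$, using the pluriharmonicity of the classifying map to the symmetric space. First, I would recall that $\cD = G/V$ and $\cR = G/K$, where $K \supset V$ is a maximal compact subgroup, so that $q:\cD\to\cR$ is a $G$-equivariant fibration with compact fiber $K/V$. Let $\mathfrak{g} = \mathfrak{k} \oplus \mathfrak{m}$ be the Cartan decomposition and $\mathfrak{g}_\bC = \bigoplus_p \mathfrak{g}^{p,-p}$ the Hodge decomposition at the base point. The horizontal tangent subspace of $\cD$ at the base point is $(\mathfrak{g}^{-1,1}\oplus\mathfrak{g}^{1,-1})_\bR\subset\mathfrak{m}$, and $dq$ is an isomorphism on it. A direct Lie-theoretic calculation using the Killing form shows that, up to a constant depending only on $\cD$, the norm $|\theta(v)|$ defining $\omega_\cL$ agrees with the Riemannian norm of $\partial u(v) \in T^\bC\cR$, where $u := q\circ f$. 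Equivalently, $\pi_X^*\omega_\cL$ is comparable, as a positive $(1,1)$-form, to the $(1,1)$-part of $u^*g_\cR$.

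Second, the map $u:\widetilde{X}\to\cR$ is pluriharmonic, which is a classical consequence of the Hodge-theoretic identities of Sampson and Simpson: the Hodge metric on $\cL$ is the harmonic metric on the underlying local system, so its classifying map to $\cR$ is pluriharmonic. This gives, for any smooth $\psi:\cR\to\bR$ and any $v\in T^{1,0}_{\widetilde{X}}$,
\begin{align*}
\hess(\psi\circ u)(v,\bar v) = (D^2\psi)(\partial u(v), \overline{\partial u(v)}), \qquad
\sqrt{-1}\,\partial(\psi\circ u)\wedge\db(\psi\circ u)(v,\bar v) = |d\psi(\partial u(v))|_{g_\cR}^2,
\end{align*}
where $D^2\psi$ denotes the Riemannian Hessian of $\psi$ on $\cR$. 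Thus both inequalities reduce to producing a smooth positive $K$-invariant $\psi_\cD:\cR\to\bR_{>0}$ with $D^2\psi_\cD\geq c_1 g_\cR$ on horizontal directions and $|\nabla\psi_\cD|^2_{g_\cR}\leq c_2$ pointwise, for constants depending only on $\cD$.

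The main construction is therefore that of $\psi_\cD$ itself. A natural candidate is built from matrix coefficients: fix a faithful finite-dimensional representation $\rho:G\to\mathrm{GL}(W)$ with a $K$-invariant inner product on $W$, and take a $K$-averaged convex smoothing of $\log|\rho(g)w_0|$ for suitable vectors $w_0\in W$. Such a function is smooth, $K$-invariant, positive, and has uniformly bounded gradient by construction; its Hessian, restricted to the horizontal directions coming from $dq(T^h\cD)$, is controlled from below using the strict non-positive curvature of $\cR$ in precisely those directions and the algebraic structure of the Hodge decomposition. This is the technical heart of the argument and is essentially the content of Eyssidieux's construction.

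The main obstacle is exactly this simultaneous control of $D^2\psi_\cD$ from below and $|\nabla\psi_\cD|$ from above: on a non-compact Hadamard manifold these conditions are in tension, since $D^2\psi\geq cg$ globally would force $|\nabla\psi|$ to grow linearly along geodesics. The construction crucially exploits the fact that the Hessian lower bound is only needed on horizontal tangent vectors — the directions in which $\partial u(v)$ actually lies by Griffiths horizontality — rather than on all of $T\cR$. Once $\psi_\cD$ is in hand, the two stated inequalities follow immediately from the pluriharmonicity formulas above together with the comparability $|\partial u(v)|^2_{g_\cR}\asymp\omega_\cL(v,\bar v)$, after rescaling $\psi_\cD$ by a constant to absorb the proportionality factors.
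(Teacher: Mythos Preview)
The paper does not prove this proposition—it is cited from \cite{Esy97} and marked with a \qed—so there is no in-paper argument to compare against. Your overall architecture (reduce via the pluriharmonicity of $u=q\circ f$ to a gradient/Hessian estimate for a function $\psi_\cD$ on $\cR$, using that $\pi_X^*\omega_\cL$ is comparable to the pullback of $g_\cR$ by $\partial u$) is correct and is Eyssidieux's strategy.

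The gap is in your formulation of the Hessian requirement and hence in your resolution of the gradient/Hessian tension. You reduce to ``$D^2\psi_\cD\geq c_1 g_\cR$ on horizontal directions'' and claim the tension dissolves because the horizontal subspace is proper in $T\cR$. But when $\cD=\cR$ is Hermitian symmetric (e.g.\ any weight-one VHS), every real tangent direction is horizontal, so your condition becomes the global bound $D^2\psi_\cD\geq c_1 g_\cR$ on a complete Hadamard manifold—incompatible, as you yourself observe, with $|\nabla\psi_\cD|\leq c_2$. What is actually needed is only the complexified bound $D^2\psi_\cD(w,\bar w)\geq c\,|w|^2$ for $w\in\mathfrak{g}^{-1,1}\subset\mathfrak{m}_\bC$; writing $w=a-iJa$ with $a$ real and $J$ the almost-complex structure on the horizontal bundle, this is the $J$-\emph{averaged} inequality $D^2\psi_\cD(a,a)+D^2\psi_\cD(Ja,Ja)\geq 2c\,|a|^2$, strictly weaker than a pointwise lower bound on real horizontal vectors. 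On $\mathbb{H}$, the function $-\log\mathrm{Im}(z)$ has Riemannian Hessian vanishing along $\partial_y$ yet satisfies this averaged bound with $|\nabla\psi|\equiv 1$; this is the prototype of Eyssidieux's construction (built from logarithms of determinants of the Hodge metrics on the graded pieces), and your matrix-coefficient sketch invoking ``strict non-positive curvature of $\cR$ in precisely those directions'' does not make contact with this mechanism.
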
 
 	Therefore, we have the following consequence.
 	\begin{corollary}\label{cor:eta}
	Let $X$ be a smooth projective variety and let $\cL$ be a $\bC$-VHS on $X$. Fix a Kähler metric $\omega$ on $X$. For  $\bC$-VHS $\cL$ on $X$, there exists a smooth 1-form $\eta_\cL$ on the universal cover $\widetilde{X}$ and a constant $C > 0$ such that, $d\eta_\cL$ is a real positive $(1,1)$-form satisfying
 \begin{align}
	|\eta_\cL|_{\pi_X^*\omega} \leq C, \quad d\eta_\cL \geq \pi_X^*\omega_\cL.
\end{align}  
\end{corollary}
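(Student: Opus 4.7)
The plan is to realize $\eta_\cL$ as a suitable multiple of $d^c\phi$, where $\phi$ is the smooth plurisubharmonic function on $\widetilde{X}$ produced by \Cref{prop:eys} and $d^c:=\sqrt{-1}(\bar{\partial}-\partial)$. The reason this should work is that we already have \emph{both} a lower bound $c\,\pi_X^*\omega_\cL\le\hess\phi$ and an upper bound $\sqrt{-1}\partial\phi\wedge\bar\partial\phi\le\pi_X^*\omega_\cL$, and the first will control $d\eta_\cL$ from below while the second will bound $|\eta_\cL|_{\pi_X^*\omega}$ from above. Crucially, the upper bound is against a form that descends from the \emph{compact} base $X$, so it is uniform on $\widetilde{X}$.

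Concretely, I would first invoke \Cref{prop:eys} to obtain $\phi\in C^\infty(\widetilde{X})$ together with $c>0$ satisfying \eqref{eq:eys} and \eqref{eq:eys2}. Then I would set
\[
\eta_\cL:=\frac{1}{2c}\,d^c\phi,
\]
which is a smooth real $1$-form on $\widetilde{X}$. Using the standard identity $dd^c=2\sqrt{-1}\partial\bar{\partial}$, one immediately computes
\[
d\eta_\cL=\frac{1}{2c}\,dd^c\phi=\frac{1}{c}\hess\phi\ \ge\ \pi_X^*\omega_\cL,
\]
so $d\eta_\cL$ is a real positive $(1,1)$-current (indeed a smooth form) of the required size.

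The only remaining point is the pointwise sup bound on $|\eta_\cL|_{\pi_X^*\omega}$. Here I would use that $\omega_\cL$ is a smooth $(1,1)$-form on the compact manifold $X$, so $\mathrm{tr}_\omega\omega_\cL$ is bounded on $X$ by some constant $M$, and consequently $\mathrm{tr}_{\pi_X^*\omega}(\pi_X^*\omega_\cL)\le M$ on all of $\widetilde{X}$. Taking the trace of \eqref{eq:eys} with respect to $\pi_X^*\omega$ and using that for any $(1,0)$-form $\alpha$ one has $\mathrm{tr}_{\pi_X^*\omega}(\sqrt{-1}\alpha\wedge\bar\alpha)=|\alpha|^2_{\pi_X^*\omega}$, yields $|\partial\phi|^2_{\pi_X^*\omega}\le M$. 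Since $\phi$ is real, $|\bar\partial\phi|_{\pi_X^*\omega}=|\partial\phi|_{\pi_X^*\omega}$, and therefore $|d^c\phi|_{\pi_X^*\omega}\le 2\sqrt{M}$, giving $|\eta_\cL|_{\pi_X^*\omega}\le\sqrt{M}/c=:C$.

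I do not foresee any serious obstacle: the corollary is essentially just an unpacking of \Cref{prop:eys}. The only content beyond algebra is the observation that the smoothness of $\omega_\cL$ on the compact base $X$ turns the pointwise majorization \eqref{eq:eys} into a uniform bound upstairs on $\widetilde{X}$, which is what distinguishes this corollary from the merely sub-linear bounds needed elsewhere in the paper.
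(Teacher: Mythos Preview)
Your argument is correct and essentially identical to the paper's: the paper sets $\eta_\cL:=\tfrac{\sqrt{-1}}{c}\bar\partial\phi$ rather than $\tfrac{1}{2c}d^c\phi$, but these differ by the closed form $\tfrac{\sqrt{-1}}{2c}d\phi$, so $d\eta_\cL$ agrees in both cases, and the bound on $|\eta_\cL|_{\pi_X^*\omega}$ is obtained in exactly the same way from \eqref{eq:eys} together with the compactness of $X$.
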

\begin{proof}
	Define
	\[
	\eta_\cL := \frac{\sqrt{-1}  \db \phi}{c},
	\]
	where $\phi$ and the constant $c > 0$ are as in \cref{prop:eys}. Since there exists a constant $c_1 > 0$ such that $\omega \geq c_1 \omega_\cL$, it follows from \eqref{eq:eys} that
	\[
	|\eta_\cL|_{\pi_X^*\omega} \leq \frac{c_1}{c}.
	\]
	Moreover, by \eqref{eq:eys2}, we have
	\[
	d\eta_\cL \geq \pi_X^*\omega_\cL.
	\]
\end{proof}
	\subsection{Case of semi-simple local systems}\label{subsec:semisimple} 
		We will use   techniques in proving the reductive Shafarevich conjecture in \cite{DY23,Eys04}.  We summarize the main results needed in  proving \cref{thm:main3} as follows. 
	\begin{theorem}[\protecting{\cite[Proof of Theorem 4.31]{DY23}}]\label{thm:Sha}
		Let \( X \) be a smooth projective variety, and let \( \varrho: \pi_1(X) \to \GL_N(\bC) \) be a semisimple representation. If \( \varrho \) is generically large, then there exist:
		\begin{enumerate}
			\item a family of Zariski dense representations \( \{\tau_i: \pi_1(X) \to G_i(K_i)\}_{i=1,\ldots,\ell} \), where each \( G_i \) is a reductive group over a non-archimedean local field \( K_i \) of characteristic zero;
			\item a \( \bC \)-VHS \( \cL \) on \( X \),
		\end{enumerate}
		such that
		\[
		T_{\tau_1} + \cdots + T_{\tau_\ell} + \sqrt{-1} \, \mathrm{tr}(\theta \wedge \theta^*)
		\]
		is a closed positive \( (1,1) \)-current on \( X \), which is smooth and strictly positive over a non-empty analytic open subset \( X^\circ \subset X \).
		
		Here:
		\begin{itemize}
			\item \( T_{\tau_i} \) denotes the canonical current on \( X \) associated with \( \tau_i \), as defined in \cref{def:canonical};
			\item \( \theta \) is the Higgs field of the Hodge bundle associated with \( \cL \), and \( \theta^* \) is its adjoint with respect to the Hodge metric. \qed
		\end{itemize}
	\end{theorem}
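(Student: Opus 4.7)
My plan is to follow the strategy developed in \cite{Eys04,DY23} for the reductive Shafarevich conjecture, which combines Gromov--Schoen harmonic map theory into Bruhat--Tits buildings with Simpson's non-abelian Hodge theory. Let $M$ denote the irreducible component of the Betti moduli space $M_{\rm B}(\pi_1(X),\GL_N)$ containing the class of $\varrho$. Two cases have to be handled. If $\varrho$ is cohomologically rigid, then its character is defined over a number field $k$, and each non-archimedean place $v$ of $k$ yields a representation $\pi_1(X)\to\GL_N(k_v)$; meanwhile, Simpson's Hitchin $\bC^*$-action deforms $[\varrho]$ inside $M$ to a fixed point corresponding to a $\bC$-VHS $\cL$. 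If $\varrho$ is non-rigid, then $\dim M>0$, and a spreading-out argument of Katzarkov combined with Yamanoi's existence theorem produces non-archimedean representations $\pi_1(X)\to\GL_N(K)$ with unbounded image for suitable local fields $K$; passing to the semisimplification and then to the Zariski closure of the image, one obtains Zariski dense $\tau_i:\pi_1(X)\to G_i(K_i)$ into reductive groups $G_i$ over non-archimedean local fields $K_i$.

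With the $\tau_i$'s and the $\bC$-VHS $\cL$ in hand, I would form the canonical current $T_{\tau_i}$ via \cref{def:canonical}, using the Gromov--Schoen $\tau_i$-equivariant harmonic map into the enlarged Bruhat--Tits building $\Delta(G_i)$, and the smooth closed positive $(1,1)$-form $\omega_\cL=\sqrt{-1}\,\tr(\theta\wedge\theta^*)$ coming from the Hodge bundle of $\cL$. Each summand is a closed positive $(1,1)$-current with continuous potential, so the sum is automatically of the same type. The nontrivial assertion is that a finite collection of $\tau_i$'s together with $\cL$ can be chosen so that this sum is strictly positive on some non-empty analytic open subset $X^\circ\subset X$.

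For the strict positivity step, I would invoke \cref{thm:KE} to factor each $T_{\tau_i}$ as $s_{\tau_i}^{*}T'_{\tau_i}$ through the Katzarkov--Eyssidieux reduction $s_{\tau_i}:X\to S_{\tau_i}$, and factor $\omega_\cL$ analogously through the period-map-based reduction for $\cL$. Taking the product of these fibrations one obtains a proper surjective morphism $s:X\to Y$ onto a normal projective variety whose fibers are precisely the irreducible subvarieties on which every $\tau_i$ has bounded image and on which the monodromy of $\cL$ is finite. The key input is then that the collection $\{\tau_i\}_i$ together with $\cL$ detects generic largeness of $\varrho$: if a positive-dimensional subvariety $Z$ through a very general point were contracted by $s$, then a character-variety reconstruction argument shows that $\varrho(\mathrm{Im}[\pi_1(Z^{\rm norm})\to\pi_1(X)])$ would have to be finite, contradicting generic largeness. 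Hence $s$ is generically finite, and pulling back an ample class on $Y$ produces a strictly positive smooth form dominated by our sum on the non-empty Zariski open locus where $s$ is smooth and immersive, which we take as $X^\circ$.

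The main obstacle is precisely the last reconstruction step: one must show that a \emph{finite} family of $\tau_i$'s together with the single Hitchin-limit VHS $\cL$ is already sufficient to capture the generic largeness of $\varrho$. This relies on a Noetherian argument for the constructible decomposition of the ``bad locus'' of contracted subvarieties, combined with a careful choice of non-archimedean places tailored to the relevant components of the character variety, and on controlling how the semisimple $\varrho$ is recovered, up to bounded error on a very general subvariety, from its companion data at all places of its field of definition. Handling this uniformly across all components of $M_{\rm B}(\pi_1(X),\GL_N)$ while also incorporating the archimedean contribution of $\cL$ is the technical heart of the argument of \cite{DY23}.
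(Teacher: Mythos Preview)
The paper does not prove this statement; it is quoted from \cite[Proof of Theorem~4.31]{DY23} and closed with a \qed. Your sketch is a faithful outline of the strategy carried out there and in \cite{Eys04}: produce non-archimedean representations $\tau_i$ by rigidity/specialization arguments on the character variety, produce a $\bC$-VHS $\cL$ as a $\bC^*$-fixed point in the Dolbeault moduli space, form the canonical currents $T_{\tau_i}$ and the form $\omega_\cL=\sqrt{-1}\,\tr(\theta\wedge\theta^*)$, and then use the Katzarkov--Eyssidieux reductions of \cref{thm:KE} together with the period map to show that generic largeness of $\varrho$ forces the simultaneous reduction map to be generically finite, whence strict positivity of the sum on an open set. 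You have also correctly isolated the genuine technical heart of the argument, namely that a \emph{finite} collection $\{\tau_i\}$ together with a single $\cL$ already suffices to detect generic largeness of $\varrho$.

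One small caveat on packaging: you organize the construction around the single irreducible component of $M_{\rm B}(\pi_1(X),\GL_N)$ containing $[\varrho]$. In the non-rigid case the reconstruction step---``if $Z$ is contracted by all $s_{\tau_i}$ and by the period map of $\cL$, then $\varrho(\mathrm{Im}[\pi_1(Z^{\rm norm})\to\pi_1(X)])$ is finite''---is not transparent when the $\tau_i$ and $\cL$ are drawn solely from that component; the Hitchin limit $\cL$ of $\varrho$ can be quite degenerate (e.g.\ unitary), and a handful of specializations $\tau_i$ need not a priori control $\varrho|_Z$. In \cite{DY23} this is handled by working with the full $M$-constructible stratification of the character variety and choosing the $\tau_i$ (and forming $\cL$ by direct sum) accordingly, so that the resulting reductions jointly dominate the Shafarevich morphism ${\rm sh}_\varrho$. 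This is a matter of how the data are selected rather than a different strategy, and your final paragraph already flags exactly this point as the main obstacle.
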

	\begin{theorem}\label{thm:main3}
		Let $X$ be a smooth projective $n$-fold. Let $\varrho:\pi_1(X)\to \GL_N(\bC)$ be a semisimple representation. 
  If $\varrho$ is  generically large,  then  
 $H^{p,0}_{(2)}(\widetilde{X})=0$   for  $0\leq p\leq n-1$.
	\end{theorem}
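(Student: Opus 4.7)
The plan is to feed the partial $L^2$-vanishing machine of Theorem~\ref{thm:vanishing2} with $f=\id_X\colon X\to X$, producing on $\widetilde{X}$ a single 1-form $\beta$ of sub-linear growth whose exterior derivative dominates a closed positive $(1,1)$-current on $X$ that is smooth and strictly positive on a non-empty analytic open subset. Since $Y=X$ has dimension $m=n$, the conclusion $H^{p,0}_{(2)}(\widetilde{X})=0$ for $0\le p\le m-1$ delivered by Theorem~\ref{thm:vanishing2} is exactly the desired statement.

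First I would invoke Theorem~\ref{thm:Sha} to extract the Zariski dense non-archimedean representations $\tau_1,\ldots,\tau_\ell$, the $\bC$-VHS $\cL$ with Higgs field $\theta$, and the non-empty analytic open subset $X^\circ\subset X$ on which
\[
T:=\sum_{i=1}^{\ell}T_{\tau_i}+\sqrt{-1}\,\tr(\theta\wedge\theta^*)
\]
is smooth and strictly positive. For each $i$, Proposition~\ref{lem:equal} and Lemma~\ref{lem:distance} supply the 1-form $\beta_{\tau_i}$ on $\widetilde{X}$ defined in \eqref{eq:defbeta}: it has $L^1_{\rm loc}$ coefficients, is smooth outside a set of zero Lebesgue measure, satisfies a sub-linear norm bound, and obeys $d\beta_{\tau_i}\ge \pi_X^* T_{\tau_i}$. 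For the Hodge-theoretic piece I would apply Corollary~\ref{cor:eta} to obtain a smooth, uniformly bounded 1-form $\eta_\cL$ on $\widetilde{X}$ satisfying $d\eta_\cL\ge \pi_X^*\omega_\cL$ with $\omega_\cL:=\sqrt{-1}\,\tr(\theta\wedge\theta^*)$.

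Setting $\beta:=\eta_\cL+\sum_{i=1}^{\ell}\beta_{\tau_i}$, a finite sum of sub-linearly growing 1-forms is sub-linearly growing, and a finite union of null sets is null, so $\beta$ meets hypothesis (a) of Theorem~\ref{thm:vanishing2}. Summing the lower bounds on each piece yields $d\beta\ge \pi_X^* T$, and $T$ is a closed positive $(1,1)$-current on $X=Y$ which restricts to a smooth strictly positive form on $Y_0:=X^\circ$, verifying hypothesis (b). Applying Theorem~\ref{thm:vanishing2} with $f=\id_X$ and $T_Y=T$ then gives $H^{p,0}_{(2)}(\widetilde{X})=0$ for $0\le p\le n-1$, as desired.

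The substantive difficulty is not in this assembly step but in the two ingredients it rests upon. Theorem~\ref{thm:Sha} is where the generically large hypothesis is essential: it guarantees that the composite current $T$ is not merely non-negative but genuinely strictly positive on an open subset, which would fail if some dominating subvariety had bounded image under every $\tau_i$ and constant period map. The other non-trivial input is Theorem~\ref{thm:vanishing2}, whose proof (via the Stokes-formula argument with the cutoff $f_j$ and the positivity of $\sqrt{-1}^{p^2}\alpha\wedge\bar\alpha\wedge \omega^{n-p-1}$) is the technical heart of the paper. Given these, what remains is the bookkeeping above, together with the routine observation that Lemma~\ref{lem:distance} and Corollary~\ref{cor:eta} are compatible and can be summed.
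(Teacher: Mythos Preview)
Your proposal is correct and follows essentially the same approach as the paper: one assembles $\beta=\eta_\cL+\sum_i\beta_{\tau_i}$ from the building and period-map pieces, checks the sub-linear growth and the lower bound $d\beta\ge \pi_X^*T$ via Proposition~\ref{lem:equal}, Lemma~\ref{lem:distance}, and Corollary~\ref{cor:eta}, and then applies Theorem~\ref{thm:vanishing2} with $f=\id_X$ and $T_Y=T$.
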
 
	\begin{proof} 
By \cref{thm:Sha}, there exist	Zariski dense   representations $\{\tau_i:\pi_1(X)\to G_i(K_i)\}_{i=1,\ldots,\ell}$  where each $G_i$  is a reductive algebraic group over a non-archimedean field $K_i$, along with a $\bC$-VHS $\cL$  on $X$ satisfying the stated properties.   
We denote by $\omega_\cL:=\sqrt{-1}{\rm tr}(\theta\wedge\theta^*)$, that  is a smooth $(1,1)$-form defined in \cref{thm:Sha}.  We fix a K\"ahler form $\omega_X$ on $X$, and, by abuse of notation, also denote by $\omega_X$ its pullback to the universal cover $\widetilde{X}$.
		
	 In what follows, for any form $\eta$ we shall denote by $|\eta|$ its norm with respect to $\omega_X$.   		Let $u_i:\widetilde{X}\to \Delta(G_i)$ be the $\tau_i$-equivariant pluriharmonic map whose existence is ensured by \cite{GS92}, where $\Delta(G_i)$ is the Bruhat-Tits building of $G_i$.  By \eqref{eq:defbeta},  if we define
		\begin{align}\label{eq:beta}
		\beta_i(x)=\sqrt{-1} \db d^2_{\Delta(G_i)}(u_i(x),u_i(x_0)),
		\end{align}   then  by \cref{lem:distance}, $\beta_i$ has $L_{\rm loc}^1$-coefficients and  is  smooth outside a set of zero Lebesgue measure.  Moreover,     there exists a constant $c_1>0$  with 
		$
	|\beta_i(x)|\leq_{\rm a.e.}c_1 (d_{\widetilde{X}}(x,x_0)+1) 
		$  
		for any $i$.

		Let $\eta_\cL$ be  smooth 1-form in $\widetilde{X}$ defined in \cref{cor:eta}. Then we have 
		\begin{align} \label{eq:between}
|\eta_\cL|\leq c_2,\quad d\eta_\cL\geq \pi_X^*\omega_\cL 
		\end{align}  
		for some positive constant $ c_2$.    
	Define 
 $\beta:=\eta_\cL+\sum_{i=1}^{\ell}\beta_i$.  Then $\beta$ has $L_{\rm loc}^1$-coefficients. By \cref{lem:distance} and \eqref{eq:between}, it satisfies 
\begin{align}\label{eq:dsub}
	  |\beta(x)|\leq_{\rm a.e.} c_3(1+d_{\widetilde{X}}(x, x_0)) 
\end{align} 
	 for some constant $c_3>0$.   	By \cref{lem:equal} and \eqref{eq:between}, 	  we have
	 \begin{align*} d\beta\geq   \pi_X^*(\sum_{i=1}^{\ell}T_{\tau_i}+\omega_{\cL} ). 
	 \end{align*}   
 By  \cref{thm:Sha}, $\sum_{i=1}^{\ell}T_{\tau_i}+\omega_{\cL}$ is closed positive $(1,1)$-current, that is smooth and strictly positive over an analytic   open subset of $X$.  Therefore,   the 1-form $\beta$ on $\widetilde{X}$ satisfies the assumptions of \cref{thm:vanishing2}, with $f$ taken to be the identity map. We thus obtain the desired $L^2$ vanishing result.
	\end{proof} 
	\begin{remark}\label{rem:final}
		If we compare the proof of \cref{thm:main3}  with that  of the reductive Shafarevich conjecture in \cite{DY23,Eys04}, we observe striking similarities in their approaches. 
		Indeed, in \cite[Proposition 4.1.1]{Eys04}, Eyssidieux proved the following result: let $X$ be   a compact K\"ahler normal variety. If there exist a continuous plurisubharmonic function $\phi:\widetilde{X}\to \bR_{>0}$ and a positive closed $(1,1)$-current $T$ on $X$ with continuous potential such that  $\{T\}$ is a K\"ahler class and  $\hess\phi\geq \pi_X^*T$, then $\widetilde{X}$ is Stein. Therefore, if $X$ is a smooth projective variety endowed with a  semisimple and large representation $\varrho:\pi_1(X)\to \GL_N(\bC)$, then we can prove that $\widetilde{X}$  is Stein using \cref{thm:Sha} as follows. 
		
		Consider the continuous function
		$$
		\phi_0:=\sum_{i=1}^{\ell}	 d^2_{\Delta(G_i)}(u_i(x),u_i(x_0))+\frac{\phi}{c_2}
		$$
		on $\widetilde{X}$, where $u_i$ and $\phi$ are defined  in the proof of \cref{thm:Sha}. We then have
		$$
		\hess\phi_0\geq \pi_X^*(\sum_{i=1}^{\ell}T_\ell+ \omega_\cL).
		$$
		Note that $\{ \sum_{i=1}^{\ell} T_\ell+\omega_\cL\}$ is a K\"ahler class in $X$ if $\varrho$ is large by \cref{thm:Sha}. Therefore, by the above Eyssidieux's criterion, we conclude that $\widetilde{X}$ is Stein. 	
	\end{remark}

\subsection{Proof of \cref{main:kollar}}\label{subsec:linear}
In this subsection we will prove \cref{main:kollar}. 
\begin{theorem} \label{thm:main2}
	Let $X$ be a smooth projective $n$-fold. Let $\varrho:\pi_1(X)\to \GL_N(\bC)$ be a generically large  representation.  Then
  $H^{p,0}_{(2)}(\widetilde{X})=0$   for  $0\leq p\leq n-1$.
\end{theorem}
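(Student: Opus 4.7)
The plan is to combine the semisimple vanishing already established in \Cref{thm:main3} with the structural results underlying the linear Shafarevich conjecture of \cite{EKPR12}, mediating everything through the partial vanishing \Cref{thm:vanishing2} and the abelian-variety vanishing \Cref{prop:abelian}. At a high level, the semisimple part $\varrho^{\mathrm{ss}}$ of $\varrho$ produces a fibration $f\colon X\to Y$ fitting the hypotheses of \Cref{thm:vanishing2}, while on the generic fiber the unipotent part of $\varrho$ is realized via a variation of mixed Hodge structure as an Albanese-type map into an abelian variety of full dimension, allowing \Cref{prop:abelian} to close the argument.

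First I would construct the fibration. Let $\varrho^{\mathrm{ss}}$ be the semisimplification of $\varrho$; after possibly replacing $X$ by a finite \'etale cover, apply \Cref{thm:Sha} to $\varrho^{\mathrm{ss}}$ to obtain non-archimedean representations $\tau_i$ and a $\bC$-VHS $\cL$. Using \Cref{thm:KE}, the canonical currents $T_{\tau_i}$ and the form $\omega_\cL$ all descend through the common Katzarkov--Eyssidieux reduction to a proper surjective holomorphic map $f\colon X\to Y$ with connected fibers onto a normal projective $Y$ of some dimension $m$, carrying a closed positive $(1,1)$-current $T_Y$ with continuous potential that is smooth and strictly positive on a non-empty Zariski open subset $Y^\circ\subset Y^{\mathrm{reg}}$ over which $f$ is a proper submersion. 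Combining the 1-forms $\beta_{\tau_i}$ of \eqref{eq:beta} with the bounded form $\eta_\cL$ of \Cref{cor:eta}, the sum $\beta:=\eta_\cL+\sum_i \beta_{\tau_i}$ has $L^1_{\mathrm{loc}}$ coefficients, satisfies $|\beta(x)|_{\pi_X^*\omega}\leq C(1+d_{\widetilde{X}}(x,x_0))$ almost everywhere by \Cref{lem:distance}, and verifies $d\beta\geq \pi_X^*f^*T_Y$. All hypotheses of \Cref{thm:vanishing2} are thereby met.

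Next I would analyze the fibers. Suppose, for contradiction, that there is a nonzero $\alpha\in H^{p,0}_{(2)}(\widetilde{X})$ for some $0\leq p\leq n-1$. By \Cref{item:partial} necessarily $p\geq m$, and then \Cref{item:long} combined with \Cref{item:d-closed} produces, on a generic smooth fiber $F_y=\tilde f^{-1}(y)$ over $y\in\widetilde{Y}^\circ$, a nontrivial $d$-closed holomorphic $(p-m)$-form $\alpha_y$ that is $L^2$ on the cover $\widetilde{F_y}$ induced from $\widetilde{X}$. By construction of $f$, the image $\varrho^{\mathrm{ss}}(\pi_1(F_y))$ is bounded, hence finite, for general $y$, so $\varrho|_{\pi_1(F_y)}$ has virtually unipotent image; the generic largeness of $\varrho$ forces this image to remain infinite, so it contains a nontrivial torsion-free abelian quotient. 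Invoking the mixed Hodge structure on the unipotent completion of $\pi_1(F_y)$ (as used in the linear Shafarevich story \cite{EKPR12}), one realizes this quotient geometrically: after a finite \'etale cover, $F_y$ admits a holomorphic map $g\colon F_y\to A$ to an abelian variety with $\dim g(F_y)=\dim F_y=n-m$. Applying \Cref{prop:abelian} to $g$ and to the intermediate cover of $F_y$ factoring through $F_y\times_A\widetilde{A}$ gives $H^{p-m,0}_{(2)}(\widetilde{F_y})=0$, which contradicts the nontriviality of $\alpha_y$.

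The hard part will be carrying out the fiber step rigorously. The existence of the fibration $f$ and the 1-form $\beta$ is a clean packaging of ingredients already gathered in the semisimple case, and \Cref{thm:vanishing2} does the global $L^2$ work. The genuinely new input, requiring the full strength of the VMHS techniques from the proof of the linear Shafarevich conjecture, is the construction on the generic fiber of a map to an abelian variety whose image has the maximal dimension $n-m$; this is where the linearity of $\varrho$ (as opposed to mere semisimplicity) is used in an essential way, and it is the delicate point that must be verified before \Cref{prop:abelian} can be applied to finish the proof.
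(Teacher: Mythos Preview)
Your strategy matches the paper's, but two points deserve tightening. First, you invoke \Cref{thm:Sha} for $\varrho^{\mathrm{ss}}$, yet $\varrho^{\mathrm{ss}}$ need not be generically large (e.g.\ when $\varrho$ has unipotent image), so that theorem does not literally apply; the paper instead takes $f=\mathrm{sh}_M\colon X\to\mathrm{Sh}_M(X)$, the reductive Shafarevich morphism for the \emph{entire} moduli space $M=M_{\rm B}(X,\GL_N)$, citing \cite[Proof of Theorem~3.29]{DY23} for the existence of $\tau_i,\cL$ whose currents descend to $\mathrm{Sh}_M(X)$ and sum to something smooth and strictly positive on an open subset there. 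This choice of $f$ is also what makes the fiber step you correctly flag as delicate go through cleanly: by \cite[\S5]{EKPR12}, for a general fiber $X_y$ of $\mathrm{sh}_M$ there is a map $a\colon X_y\to A$ to an abelian variety whose Stein factorization is $\mathrm{sh}_H|_{X_y}$, and since the generic largeness of $\varrho$ forces the linear Shafarevich morphism $\mathrm{sh}_H$ to be birational, one gets $\dim a(X_y)=\dim X_y$ at once. Second, the $L^2$-integrability of $\alpha_y$ is not delivered by \Cref{item:long} or \Cref{item:d-closed}; the paper inserts a short Fubini argument over a coordinate chart in $Y$ to obtain it for almost every $y$, and then invokes \cite[Lemma~5.1]{EKPR12} to check that the cover of $X_y$ induced from $\widetilde{X}$ is an infinite Galois cover factoring through $X_y\times_A\widetilde{A}$, so that \Cref{prop:abelian} applies.
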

\begin{proof}
	We fix a smooth K\"ahler metric $\omega_X$ on $X$. To lighten the notation, we write  $\omega_X$   instead of   $\pi_X^*\omega_X$ abusively.

	\noindent {\bf Step 1.} Consider the Betti moduli space \( M := M_{\mathrm{B}}(X, \GL_N)\). Let 
$
	\mathrm{sh}_M : X \to \mathrm{Sh}_M(X)
$ 
	denote the reductive Shafarevich morphism associated with \( M \) (cf.\ \cite{Eys04, DY23}). In other words, it is a morphism over a normal projective variety such that, for any closed subvariety \( Z \subset X \),  \( \mathrm{sh}_M(Z) \) is a point if and only if, for every \emph{semisimple} representation \( \sigma: \pi_1(X) \to \mathrm{GL}_N(\mathbb{C}) \), the image \( \sigma(\mathrm{Im}[\pi_1(Z) \to \pi_1(X)]) \) is finite.
	
	By \cite[Proof of Theorem 3.29]{DY23}, after replacing \( X \) by a finite étale cover, there exist:
	\begin{itemize}
		\item a family of Zariski dense representations \( \{\tau_i : \pi_1(X) \to G_i(K_i)\}_{i=1,\ldots, \ell} \), where each \( G_i \) is a reductive group over a non-archimedean local field \( K_i \) of characteristic zero;
		\item a complex variation of Hodge structure \( \cL \);
		\item a smooth semi-positive \( (1,1) \)-form \( \omega_M \) on \( \mathrm{Sh}_M(X) \);
		\item for each \( i \), a closed positive \( (1,1) \)-current \( S_i \) on \( \mathrm{Sh}_M(X) \).
	\end{itemize}
	These data satisfy
	\[
	T_{\tau_i} = \mathrm{sh}_M^* S_i, \quad \text{and} \quad \sqrt{-1} \, \mathrm{tr}(\theta \wedge \theta^*) = \mathrm{sh}_M^* \omega_M,
	\]
	where \( T_{\tau_i} \) denotes the canonical current associated with \( \tau_i \) in \cref{def:canonical}. 

Moreover, there exists a non-empty analytic open subset \( U \subset \mathrm{Sh}_M(X) \) such that the positive \( (1,1) \)-current
$
\sum_{i=1}^\ell S_i + \omega_M
$ 
is smooth and strictly positive on \( U \). We denote by \( f : X \to Y \) the reductive Shafarevich morphism \( \mathrm{sh}_M \). 
	
	\medspace
	
		\noindent {\bf Step 2.}  
	For each $i$, let  $\beta_i$ be  the 1-form on $\widetilde{X}$ defined in \eqref{eq:beta} associated with the representation $\tau_i$. By \cref{lem:equal}, it is smooth outside a set of Lebesgue measure zero and satisfies 
	\begin{align}\label{eq:f1}
		d\beta_i\geq \pi_X^*T_{\tau_i}.
	\end{align}	 	By \cref{lem:distance}, 	 there exists  	a number $c_0>0$    such that for any $i\in\{1,\ldots,\ell\}$, we have
	\begin{align}\label{eq:sublinear4}
		|\beta_i(x)|_{\omega_X} \leq_{\rm a.e.} c_0(1+d_{\widetilde{X}}(x, x_0)).
	\end{align}   
	
	Let $\eta_\cL$ be  smooth 1-form in $\widetilde{X}$ defined in \cref{cor:eta}. Then we have 
	\begin{align} \label{eq:estimate5}
		|\eta_\cL|\leq c_1,\quad d\eta_\cL\geq \pi_X^*\sqrt{-1}\mathrm{tr}(\theta \wedge \theta^*) 
	\end{align}  
	for some positive constant $ c_1$.    
	Write 
	$$\beta:=\beta_1+\cdots+\beta_\ell+\eta_\cL. $$ Then $\beta$ has $L_{\rm loc}^1$-coefficients and smooth  outside a set of Lebesgue measure zero.  By \eqref{eq:sublinear4} and \eqref{eq:estimate5}, we have
	\begin{align}\label{eq:bound}
		|\beta(x)|_{\omega_X}\leq_{\rm a.e.} c_2(1+d_{\widetilde{X}}(x, x_0)),
	\end{align} 
	for some constant $c_2>0$. 
	By  \eqref{eq:f1} and \eqref{eq:estimate5}, one has \begin{align}\label{eq:kahlerpara}
		\pi_X^*f^*( \sum_{i=1}^\ell S_i + \omega_M)=	T_{\tau_1}+\cdots+T_{\tau_\ell}+\sqrt{-1}{\rm tr}(\theta\wedge\theta^*)\leq  	  d\beta.   
	\end{align}  
	Recall that there exists a non-empty analytic open subset \( U \subset Y \) such that the positive \( (1,1) \)-current
	$
	\sum_{i=1}^\ell S_i + \omega_M
	$
	is smooth and strictly positive on \( U \).
	Therefore,   the 1-form $\beta$ satisfies   the conditions in \cref{thm:vanishing2}.  
	
	\medspace
	
	\noindent {\bf Step 3.} Let \( H \) be the intersection of the kernels of all representations \( \pi_1(X) \to \GL_N(\bC) \). By \cite[Theorem~1]{EKPR12}, the Shafarevich morphism 
	\[
	\mathrm{sh}_H: X \to \mathrm{Sh}_H(X)
	\]
	associated with \( H \) exists. That is, \( \mathrm{sh}_H \) is a morphism to a normal projective variety with connected fibers, characterized by the following property: for any subvariety \( Z \subset X \), the image 
	\[
	\mathrm{Im}[\pi_1(Z) \to \pi_1(X)/H]
	\]
	is finite if and only if \( \mathrm{sh}_H(Z) \) is a point.
	
	In particular, since the representation \( \varrho: \pi_1(X) \to \GL_N(\bC) \) is generically large, it follows that \( \mathrm{sh}_H \) is a birational morphism. Moreover, the morphism \( \mathrm{sh}_M \) factors through \( \mathrm{sh}_H \)
	\begin{equation*}
		\begin{tikzcd}
		X\arrow[r,"{\rm sh}_H"] \arrow[dr, "{\rm sh}_M"' ]& \mathrm{Sh}_H(X)\arrow[d,"q"]\\
&		 \mathrm{Sh}_M(X)
		\end{tikzcd}
	\end{equation*}
	
	By \cite[\S5]{EKPR12}, for a general smooth fiber \( Z \) of \(  \mathrm{sh}_H \), there exists an abelian variety \( A \) and a morphism 
	\[
	a: Z \to A
	\]
	such that the Stein factorization of \( a \) coincides with the restriction \( \mathrm{sh}_H|_Z: Z \to \mathrm{sh}_H(Z) \). Since \( \mathrm{sh}_H \) is birational, we conclude that
	\[
	\dim Z = \dim a(Z).
	\]
	
	\medspace

	\medspace
	
	\noindent {\bf Step 4.} We will apply \cref{thm:vanishing2} and use its notations as defined therein, without re-explaining their meanings.  Assume by contradiction that, for some $p\in \{0,\ldots,n-1\}$, there is a non-trivial $\alpha\in H^{p,0}_{(2)}(\widetilde{X})$. By \cref{item:partial},  we have $n>m$ and $p\geq m$. Furthermore, over the analytic open subset $Y^\circ$ of  $Y^{\rm reg}$, 
	we have   \begin{align*} 
		\alpha|_{\widetilde{X}^\circ}\in H^0(\widetilde{X}^\circ, \tilde{f}^*\Omega_{\widetilde{Y}^\circ}^{m}\otimes \Omega_{\widetilde{X}^\circ}^{p-m}), 
	\end{align*}
	where $\widetilde{Y}^\circ:= \pi_Y^{-1}(Y^\circ)$ and $\widetilde{X}^\circ:=\tilde{f}^{-1}(\widetilde{Y}^\circ)$.   We pick any  $y_0\in  {Y}^\circ$, and choose a simply connected coordinate open subset $(V;w_1\ldots,w_m)$ centered at $y_0$.  We  abusively denote by  $V$ a connected component of $\pi_Y^{-1}(V)$. Then $dw_1\wedge \cdots\wedge dw_m$ is a nonwhere-vanishing holomorphic $m$-form on $V$. Let $\Omega$ be a connected component of $\tilde{f}^{-1}(V)$.  We denote by $g:\Omega\to V$ the restriction of $\tilde{f}$  on $\Omega$. Then $g$ is a submersion with connected fibers. For any $y\in V$, by Step 3 in the proof of \cref{thm:vanishing2},  $dw_1\wedge \cdots\wedge dw_m$  induces  a unique holomorphic $(p-m)$-form $\alpha_{y}$ on  $g^{-1}(y)$ defined in \eqref{eq:express}  such that  $$\alpha_y\wedge \tilde{f}^*(dw_1\wedge \cdots\wedge dw_m)=\alpha.$$
	Then by \eqref{eq:express2} and \eqref{eq:express}   together with the Fubini theorem,  we have
	\begin{align*}
		0\leq &\int_{V} \left(\int_{g^{-1}(y)}\sqrt{-1}^{(p-m)^2}\alpha_{y}\wedge \overline{\alpha_{y}}\wedge (\omega_X|_{g^{-1}(y)})^{n-p}\right) idw_1\wedge d\bar{w}_1\wedge\cdots\wedge  idw_m\wedge d\bar{w}_m\\
		&	 = \int_\Omega \sqrt{-1}^{p^2}\alpha\wedge \bar{\alpha}\wedge \omega_X^{n-p} \leq \int_{\widetilde{X}}|\alpha|^2\dvol<+\infty. 
	\end{align*} 
	Therefore,  there is a subset $Z$ of $V$ with zero Lebesgue measure,  such that for any $y\in V\backslash Z$, we have
	\begin{align}\label{eq:L2}
		\int_{g^{-1}(y)}\sqrt{-1}^{(p-m)^2}\alpha_{y}\wedge \overline{\alpha_{y}}\wedge (\omega_X|_{g^{-1}(y)})^{n-p}<\infty.
	\end{align} 
	Thus, we   construct an $L^2$ holomorphic $(p-m)$-form $\alpha_y$ on $g^{-1}(y)$ for any $y\in V\backslash Z$, which is also $d$-closed by \cref{item:d-closed}. We note that  for any $x\in \tilde{f}^{-1}(y)$,   $\alpha_y(x)=0$ if and only if $\alpha(x)=0$.
	
	Denote \( X_y := f^{-1}(y) \). By Step 3, if we choose a general point \( y \in V \setminus Z \), then there exists a morphism \( a: X_y \to A \) to an abelian variety \( A \) such that \( \dim X_y = \dim a(X_y) \). Let \( \widetilde{X_y}' \) be a connected component of the fiber product \( X_y \times_{A} \widetilde{A} \),  where $\widetilde{A}$ is the universal cover of $A$, and let \( \pi_y': \widetilde{X_y}' \to X_y \) denote the covering map. By \cite[Lemma 5.1]{EKPR12}, the map \( g^{-1}(y) \to X_y \) is an infinite Galois covering that factors through \( \pi_y': \widetilde{X_y}' \to X_y \).
	
	The conditions of \cref{prop:abelian} are therefore satisfied, and we deduce that \( \alpha_y \equiv 0 \). Since \( y \) is a general point in \( V \setminus Z \), it follows that \( \alpha(x) = 0 \) for almost all \( x \in \Omega \). By continuity, we conclude that \( \alpha(x) = 0 \) everywhere. This yields a contradiction. Therefore, we conclude that
	\[
	H^{p,0}_{(2)}(\widetilde{X}) = 0 \quad \text{for all } p \in \{0, \ldots, n-1\}.
	\]
	The theorem is proved.
\end{proof} 
We will apply \cref{thm:main2} to prove \cref{main:kollar}.
\begin{proof} [Proof of \cref{main:kollar}]
	We fix a smooth K\"ahler metric $\omega$ on $X$. 	Let 	$\cH_{(2)}^{n,q}(\widetilde{X})	$ be the $L^2$-harmonic $(n,q)$-forms with respect to the metric $\pi_X^*\omega$. By the Lefschetz theorem in  \cite[Theorem 1.2.A]{Gr}),  for any $q\in \{1,\ldots,n\}$, 
	\begin{align*}
		\cH_{(2)}^{n-q,0}(\widetilde{X})&\to  	\cH_{(2)}^{n,q}(\widetilde{X})\\\alpha&\mapsto \omega^q\wedge\alpha
	\end{align*}
	is an isomorphism. Since we have an isomorphism $\cH_{(2)}^{n,q}(\widetilde{X})\simeq H_{(2)}^{n,q}(\widetilde{X})$,  this establishes that $ H_{(2)}^{n,q}(\widetilde{X})=0$ for $q\in \{1,\ldots,n\}$.

	We denote by $\Gamma=\pi_1(X)$ and $\dim_{\Gamma}H^{n,q}_{(2)}(\widetilde{X})$ the Von Neumann dimension of $H^{n,q}_{(2)}(\widetilde{X})$ (cf. \cite{Ati76} for the definition). 
	By Atiyah's $L^2$-index theorem along with \cref{item:vanishing}, we have
	\begin{align} \label{eq:euler}	\chi(X,K_X)=\sum_{q=0}^{n}(-1)^q\dim_{\Gamma}H^{n,q}_{(2)}(\widetilde{X})= \dim_{\Gamma}H^{n,0}_{(2)}(\widetilde{X})\geq 0.
	\end{align} 
	\Cref{item:Euler} is proved.

	If the strict  inequality   \eqref{eq:euler}  holds, then $H^{n,0}_{(2)}(\widetilde{X})\neq 0$. We then apply \cite[Corollary 13.10]{Kol95} to conclude that $K_X$ is big. \Cref{item:strict} follows.  The theorem is proved. 
\end{proof}


\end{document}